\newtheorem{theorem}{Theorem}[section]
\newtheorem{prop}[theorem]{Proposition}
\newtheorem*{rep@theorem}{\rep@title}
\newcommand{\newreptheorem}[2]{%
\newenvironment{rep#1}[1]{%
 \def\rep@title{#2 \ref{##1}}%
 \begin{rep@theorem}}%
 {\end{rep@theorem}}}
\newtheorem{lemma}[theorem]{Lemma}
\theoremstyle{definition}
\newtheorem{mydef}[theorem]{Definition}
\newtheorem{rem}[theorem]{Remark}
\newtheorem{exa}[theorem]{Example}
\newtheorem{cor}[theorem]{Corollary}
\title{{\bf{Skeletal filtrations of the fundamental group \\ 
of a non-archimedean curve}}} 
\author{Paul Alexander Helminck}
\affil{
 Swansea University, Department of Mathematics}
\begin{document}
\maketitle

\definecolor{qqqqff}{rgb}{0,0,1}

\begin{abstract}
In this paper we study skeleta of residually tame coverings of a marked curve over a non-archimedean field. We first generalize a result by Liu and Lorenzini by proving a simultaneous semistable reduction theorem for residually tame coverings. 
We then use this to construct a functor from the category of residually tame coverings of a marked curve $(X,D)$ 
to the category of tame coverings of a metrized complex $\Sigma$ associated to $(X,D)$. We enhance the latter category by adding a set of gluing data to every covering and we show that this yields an equivalence of categories. Using this equivalence, we then define filtrations of the fundamental group of the marked curve, giving for instance the absolute decomposition and inertia groups of the metrized complex. We then use the analytic slope formula to prove that the extensions that arise from the abelianizations of the decomposition and inertia quotients coincide with the extensions that arise from the toric and connected parts of the analytic Jacobian of the curve.  
\end{abstract}

\mathchardef\mhyphen="2D
\newcommand\hyp{\mhyphen}

\section{Introduction}

Let $K$ be a complete, algebraically closed non-archimedean field with a non-trivial valuation. In this paper, 
we study tropicalizations of {\it{residually tame}} coverings $\phi:X'\rightarrow{X}$ of curves, which   
 are coverings with an extra tameness condition on their Berkovich analytification. Namely, for every $x'\in{X'^{\mathrm{an}}}$ mapping to $x\in{X^{\mathrm{an}}}$ we require the corresponding 
extension of completed residue fields $\mathcal{H}(x)\subset{\mathcal{H}(x')}$ to be tame. 
 For these coverings, we prove the following simultaneous semistable reduction theorem, which can be seen as a non-discrete generalization of \cite[Theorem 2.3]{liu1}. 
 \begin{theorem}\label{MainThm2}
 Let $(X',D')\to(X,D)$ be a residually tame covering of marked curves. Then the inverse image of any (strongly) semistable vertex set $V$ of $(X,D)$ is a (strongly) semistable vertex set for $(X',D')$. 
 \end{theorem}
We can interpret this theorem categorically as follows. 
We first associate a metrized complex $\Sigma$ to a fixed semistable vertex set $V$ of $(X,D)$ using 
the construction in \cite[Sections 2.16 and 3.22]{ABBR1}. 
This consists of a skeleton of $(X,D)$, a set of residue curves 
for the vertices and an identification of the edges with closed points on the residue curves. 
 In terms of this language, Theorem \ref{MainThm2} then says that there is a natural functor $\mathcal{F}_{\Sigma}$ 
from the category of residually tame coverings of $(X,D)$ to the category of tame coverings of $\Sigma$. 
We denote the first category by $\mathrm{Cov}_{\mathrm{Tame}}(X,D)$ and the second by $\mathrm{Cov}(\Sigma)$. 
One of the main goals in this paper is to investigate what properties 
of $\mathrm{Cov}_{\mathrm{Tame}}(X,D)$ are preserved under $\mathcal{F}_{\Sigma}$. For instance, the category $\mathrm{Cov}_{\mathrm{Tame}}(X,D)$ has the additional structure of a Galois category, which means that there is a profinite fundamental group $\pi_{\mathrm{Tame}}(X,D)$ that classifies residually tame coverings of $(X,D)$ as in classical Galois theory. 
It is then natural to ask whether we can transfer this structure to $\mathrm{Cov}(\Sigma)$ (or an extended category). 
The answer is yes, and we use the lifting results in \cite{ABBR1} to do this. 

\begin{theorem}\label{MainThm3}

Let $V$ be a (strongly) semistable vertex set of $(X,D)$ with skeleton $\Sigma$ and let $\mathcal{F}_{\Sigma}$ 
be the corresponding functor from the category of residually tame coverings of $(X,D)$ to the category $\mathrm{Cov}_{\mathcal{G}}(\Sigma)$ of enhanced tame coverings of $\Sigma$. Then $\mathcal{F}_{\Sigma}$ induces an equivalence of categories
\begin{equation}
\mathrm{Cov}_{\mathrm{Tame}}(X,D)\simeq{{\mathrm{Cov}_{\mathcal{G}}(\Sigma)}}.
\end{equation}
\end{theorem}

The idea of the proof is as follows. 
The results in \cite{ABBR1} show that a tame covering of $\Sigma$ can have multiple (but finitely many) algebraic lifts, and the possible lifts are classified in terms of {\it{gluing data}}. To make the lift unique, we enhance a tame covering with a set of gluing data. We then define morphisms between these enhanced coverings, which gives a category $\mathrm{Cov}_{\mathcal{G}}(\Sigma)$. Using the unique lifting theorem \cite[Theorem 6.18]{ABBR1} for tame coverings of residue curves, we then show that the tropicalization functor $\mathcal{F}_{\Sigma}$ induced by Theorem \ref{MainThm2} gives an equivalence.    

From Theorem \ref{MainThm3}, 
we obtain an induced Galois category structure on $\mathrm{Cov}_{\mathcal{G}}(\Sigma)$ with corresponding profinite fundamental group $\pi(\Sigma)\simeq\pi_{\mathrm{Tame}}(X,D)$.  
We then define filtrations of 
$\pi(\Sigma)$ using the concepts of metrically unramified and completely split coverings. Here a covering $\phi$ is metrically unramified above an edge $e$ of $\Sigma$ if the dilation factors $d_{e'/e}(\phi)$ of the edges $e'$ above $e$ are all $1$, and a covering is completely split above a vertex $v$ of $\Sigma$ if there are $\mathrm{deg}(\phi)$ vertices above $v$. 
For any subcomplex $\Sigma^{0}\subset{\Sigma}$, we then consider all coverings of $\Sigma$ that are metrically unramified or completely split over $\Sigma^{0}$. These coverings correspond to closed normal subgroups of $\pi(\Sigma)$ which we call the absolute inertia group $\mathfrak{I}(\Sigma^{0})$ and the  
absolute decomposition group $\mathfrak{D}(\Sigma^{0})$ respectively. 
Their quotients $\pi_{\mathfrak{I}}(\Sigma^{0}):=\pi(\Sigma)/\mathfrak{I}(\Sigma^{0})$ and $\pi_{\mathfrak{D}}(\Sigma^{0}):=\pi(\Sigma)/\mathfrak{D}(\Sigma^{0})$ classify the connected coverings of $\Sigma$ that are unramified (resp. completely split) above $\Sigma^{0}$. If $D=\emptyset$ and $\Sigma^{0}=\Sigma$,  then we have the following theorem: 


\begin{theorem}\label{FundamentalGroupGraph1}
Let $\mathfrak{D}(\Sigma)$ be the decomposition group of $\Sigma$ in $\pi(\Sigma)$. Then $\pi_{\mathfrak{D}}(\Sigma):=\pi(\Sigma)/\mathfrak{D}(\Sigma)$ is isomorphic to the profinite completion of the ordinary fundamental group of the underlying graph $\Gamma$ of the metrized complex $\Sigma$. 
\end{theorem}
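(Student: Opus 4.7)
The plan is to assemble the result directly from Proposition \ref{CategoryToric} together with Lemma \ref{LemmaGaloisCategorySplit} and the classical description of the profinite fundamental group of a finite graph. First I would invoke Lemma \ref{LemmaGaloisCategorySplit} to identify $\pi_{\mathfrak{D}}(\Sigma(X))$ with the automorphism group of the fundamental functor on the Galois category $\mathrm{Cov}_{\mathfrak{D}}(X)$ of tame \'{e}tale coverings that split completely above $\Sigma$. By Proposition \ref{CategoryToric}, the functor $\mathcal{H}$ yields an equivalence $\mathrm{Cov}_{\mathfrak{D}}(X) \simeq \mathrm{Cov}(\Gamma)$, where $\mathrm{Cov}(\Gamma)$ is the category of finite topological coverings of the underlying graph $\Gamma$. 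Since Galois category structure is preserved under categorical equivalence (as recorded in Definition \ref{InducedStructure}), this equivalence transports the fundamental functor on $\mathrm{Cov}_{\mathfrak{D}}(X)$ to a fundamental functor on $\mathrm{Cov}(\Gamma)$ and gives a canonical isomorphism between their automorphism groups.

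Next I would identify the profinite fundamental group of $\mathrm{Cov}(\Gamma)$ with the profinite completion of the ordinary (topological) fundamental group $\pi(\Gamma)$. For a finite connected graph $\Gamma$, seen as a topological space, $\pi(\Gamma)$ is a free group of rank $\beta(\Gamma)$, and the category of finite topological coverings of $\Gamma$ is equivalent to the category of finite $\pi(\Gamma)$-sets. Hence, by part (2) of Theorem \ref{MainTheoremGaloisCategories}, the automorphism group of the fundamental functor on $\mathrm{Cov}(\Gamma)$ is canonically isomorphic to the profinite completion $\widehat{\pi(\Gamma)}$. Chaining this with the isomorphism from the previous paragraph produces the desired isomorphism
\begin{equation}
\pi_{\mathfrak{D}}(\Sigma(X)) \simeq \widehat{\pi(\Gamma)}.
\end{equation}

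The only subtle step is checking that the fundamental functor on $\mathrm{Cov}(\Gamma)$ obtained via the equivalence $\mathcal{H}$ from $F_{\Sigma}$ (restricted to $\mathrm{Cov}_{\mathfrak{D}}(X)$) agrees, up to isomorphism, with the usual fibre functor at a base vertex $v_{0}\in\Gamma$. This follows from part (3) of Theorem \ref{MainTheoremGaloisCategories}, which says that any two fundamental functors on a Galois category are isomorphic, so the resulting profinite groups are canonically isomorphic up to inner automorphism; this is sufficient since the statement of Theorem \ref{FundamentalGroupGraph1} only asserts an abstract isomorphism of profinite groups. The main obstacle, insofar as there is one, is therefore book-keeping: one must be careful that total-splitting over $\Sigma$ really does cut out those finite \'{e}tale coverings of $X$ whose induced graph coverings recover arbitrary finite coverings of $\Gamma$, and that the trivial gluing data produced in the construction of Proposition \ref{CategoryToric} suffices to lift any finite graph covering to a well-defined rigidified covering with no hidden automorphisms; both points are already handled in the preparatory results.
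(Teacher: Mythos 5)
Your proposal is correct and follows essentially the same route as the paper's proof: both chain Lemma \ref{LemmaGaloisCategorySplit} and Proposition \ref{CategoryToric} with the classical identification of $\mathrm{Cov}(\Gamma)$ with finite $\pi(\Gamma)$-sets, then conclude via the uniqueness part of Theorem \ref{MainTheoremGaloisCategories}. Your remark on the compatibility of the fundamental functors is a bit more explicit than what the paper writes, but it is the same argument.
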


By algebraic topology, it follows that $\pi_{\mathfrak{D}}(\Sigma)$ is isomorphic to the profinite completion of the free group on $\beta(\Sigma)$ generators, where $\beta(\Sigma)$ is the first Betti number of $\Sigma$. 
After this, we turn to the abelianizations of the groups $\pi_{\mathfrak{D}}(\Sigma)$ and $\pi_{\mathfrak{I}}(\Sigma)$. For any $n$ coprime to $\mathrm{char}(k)$, the cyclic \'{e}tale coverings of degree $n$ of an algebraic curve $X$ are classified by the torsion points of the Jacobian $J:=J(X)$ of $X$ using the isomorphism $J[n]\simeq{\mathrm{Hom}(\pi(\Sigma),\mathbf{Z}/n\mathbf{Z})}$. The decomposition and inertia groups  $\pi_{\mathfrak{D}}(\Sigma)$ and $\pi_{\mathfrak{I}}(\Sigma)$ naturally subdivide these cyclic coverings:
\begin{equation}\label{Subdivision1}
\mathrm{Hom}(\pi_{\mathfrak{D}}(\Sigma),\mathbf{Z}/n\mathbf{Z})\subset{\mathrm{Hom}(\pi_{\mathfrak{I}}(\Sigma),\mathbf{Z}/n\mathbf{Z})}\subset{}\mathrm{Hom}(\pi(\Sigma),\mathbf{Z}/n\mathbf{Z}). 
\end{equation} 
On the other hand, we also have such a filtration in the Jacobian of $X$ 
by the results in \cite{Baker2014} and \cite{Bosch1984}. 
Indeed, let $\mathcal{X}$ be a semistable model of $X$ with skeleton $\Sigma$ and let $\mathrm{Jac}(\Sigma)$ be the tropical Jacobian or component group associated to $\Sigma$. We consider the kernel $J^{0}$ of the tropicalization map $\overline{\tau}:J^{\mathrm{an}}\rightarrow{\mathrm{Jac}(\Sigma)}$. 
The reduction $\overline{J}^{0}$ of $J^{0}$ fits in an exact sequence
\begin{equation}\label{Decomposition}
1\rightarrow{}\overline{T}\rightarrow{\overline{J}^{0}}\rightarrow{\overline{B}}\rightarrow{1},
\end{equation}
where $\overline{T}$ is a torus and $\overline{B}$ is an abelian variety over $k$. More explicitly, we have $\overline{B}=\prod_{i}\mathrm{Jac}(\Gamma_{i})$, where the $\Gamma_{i}$ are the components in the special fiber of a semistable model $\mathcal{X}$ for $X$. 
These concepts are generalizations of phenomena in the discretely valued case, where the role of the analytic Jacobian is played by the N\'{e}ron model $\mathcal{J}/R$ of the Jacobian. This N\'{e}ron model $\mathcal{J}$ has a fiberwise connected component $\mathcal{J}^{0}$ and the special fiber of $\mathcal{J}^{0}$ fits into an exact sequence similar to the one in Equation \ref{Decomposition}. 
  

Returning to the analytic side, we now write $J^{0}[n]:=\{P\in{J[n]}:\overline{\tau}(P)=0\}$ and $T[n]:=\{P\in{J^{0}[n]}:\pi(\overline{P})=0\}$, where $\overline{P}$ is the reduction of $P$ and $\pi:\overline{J}^{0}\rightarrow{\overline{B}}$ is the map from Equation \ref{Decomposition}. We then have the inclusions  
\begin{equation}\label{Subdivision2}
T[n]\subset{J^{0}[n]}\subset{J[n]}.
\end{equation} 
We invite the reader to compare this with the material in \cite[Expos\'{e} IX, \S{}12]{grothendieck1972groupes}. We show here that the filtration of $J[n]$ in Equation \ref{Subdivision2} coincides with the filtration in Equation \ref{Subdivision1} under the isomorphism $J[n]\simeq{\mathrm{Hom}(\pi(\Sigma),\mathbf{Z}/n\mathbf{Z})}$.  




 \begin{theorem}\label{TorsionUnramifiedCoverings1}
Let $\pi_{\mathfrak{I}}(\Sigma)$ and $\pi_{\mathfrak{D}}(\Sigma)$ be the inertia and decomposition quotients of $\pi(\Sigma)$ respectively. 
Let $n$ be an integer such that $\mathrm{gcd}(n,\mathrm{char}(k))=1$. Then the isomorphism ${J[n]}\simeq{\mathrm{Hom}(\pi(\Sigma),\mathbf{Z}/n\mathbf{Z})}$ induces isomorphisms 
\begin{equation}
J^{0}[n]\simeq{}{\mathrm{Hom}(\pi_{\mathfrak{I}}(\Sigma),\mathbf{Z}/n\mathbf{Z})}
\end{equation}
and
\begin{equation}
T[n]\simeq{}{\mathrm{Hom}(\pi_{\mathfrak{D}}(\Sigma),\mathbf{Z}/n\mathbf{Z})}.
\end{equation} 
\end{theorem}

The main tool we use in the proof of this theorem is the analytic slope formula \cite[Theorem 5.15]{BPRa1}, 
which says that the local reduced divisor of a function is determined by the slopes of its logarithm. This allows us to relate the various properties of $n$-torsion points in the Jacobian to the covering data on the level of metrized complexes.    
To better illustrate the geometric subtleties behind this theorem, consider the contrapositive of the first result. 
This says that   
a point in $J[n]$ is non-trivial in the component group $\mathrm{Jac}(\Sigma)$  
if and only if  
the corresponding covering is {{metrically ramified}} above {{at least}} one non-trivial edge in the skeleton. This condition is sharp, in the sense that there are examples of cyclic coverings that are metrically ramified on a strict subgraph of $\Sigma$.  
 %
%
Overall, we interpret Theorem \ref{TorsionUnramifiedCoverings1} as giving geometric interpretations for the various torsion points in the Jacobian in terms of coverings. 
In line with this thought it now also seems natural to view 
 the inertia and decomposition groups as non-abelian generalizations of the connected and toric parts of the Jacobian.\footnote{The terminology for the toric part is borrowed from \cite[Expos\'{e} IX, \S{12}]{grothendieck1972groupes}, the connected part is called the fixed part there. } 


The paper is organized as follows. We start in Section \ref{Preliminaries} by proving some results on residually tame morphisms. We then review the results in \cite{ABBR1} and \cite{Baker2014} on lifting morphisms of metrized complexes and on skeleta of Jacobians.  
In Section \ref{SectionCoverings}, we prove the generalized simultaneous semistable reduction theorem for residually tame coverings. 
We then study the notion of an {\it{enhanced morphism}} of metrized complexes in Section \ref{RigidifiedCoverings}. This gives rise to a category and we prove the equivalence of categories stated in Theorem \ref{MainThm3}. In Section \ref{TropicalFundamental}, we reap the benefits of this equivalence and study the fundamental group of the marked curve $(X,D)$ through its tropicalization.  
In Section \ref{Abelianization1} we study the abelianization of $\pi(\Sigma)$  and give a proof of Theorem \ref{TorsionUnramifiedCoverings1}.  

This paper uses a great deal of concepts and results from \cite{ABBR1} and \cite{Baker2014}. To make the transition somewhat easier, we chose to adopt their notation for the most part (see Section \ref{OverviewSection} 
 for some exceptions). As such, the reader might benefit from a review of those papers, and more specifically of the following parts: \cite[Sections 2,4,6 and 7]{ABBR1} and \cite[Sections 4,5, and 6]{Baker2014}.

\subsection{Connections}

We give an overview here of other similar results in the literature. The notion of fundamental group in the context of non-archimedean spaces is well studied, see for instance \cite{DeJong1995} for fundamental groups of Berkovich spaces in terms of \'{e}tale coverings and \cite{Andre2003} for tempered fundamental groups. In \cite{DeJong1995}, one starts with the ordinary fundamental group for algebraic coverings and then moves on to the so-called topological coverings (we call these completely split coverings). Our approach lies in between these two, since every topological covering is residually tame (see Proposition \ref{CategoryToric}), but not every algebraic covering is residually tame (see Example \ref{SeparableExample}). 
The statement of Theorem 
\ref{FundamentalGroupGraph1} 
 appears in various guises throughout the literature, see for instance 
 \cite[Theorem 2.6]{DeJong1995}.  Our results can be seen as a natural generalization where the category of coverings of a graph is replaced by the more general category of enhanced tame coverings of a metrized complex $\Sigma$.
 
 A paper in this area that is also close to ours is \cite{SADI1997}. Here one creates an auxiliary fundamental group using the intersection graph of a semistable model $\mathcal{X}/R$ over a discrete valuation ring $R$ and one then shows (under some restrictions) that its profinite completion is isomorphic to the \'{e}tale fundamental group of an open subscheme of $\mathcal{X}$. Since the base ring is fixed, this 
 excludes a large part of what we call {\it{metrically ramified}} morphisms, essentially by \cite[Th\'{e}or{e}me 3.7]{SADI1997}. In particular, one does not consider all extensions that arise from the component group of the N\'{e}ron model of the Jacobian of the curve. Another advantage that this paper has is that the isomorphisms of profinite fundamental groups that we create are natural, in the sense that they arise from an equivalence of Galois categories. This stronger result for instance implies that there is a unique Galois closure for an enhanced covering of metrized complexes. 
 
 
 
 We can also compare the profinite fundamental groups studied here to the 
 tempered fundamental groups in \cite{Andre2003}.    
 There, one considers quotients of composites of finite \'{e}tale coverings and possibly infinite topological coverings. The corresponding fundamental groups often have a non-profinite part, for instance if an elliptic curve has multiplicative reduction then 
\begin{equation}
\pi^{\mathrm{temp}}(E)\simeq{\hat{\mathbf{Z}}\times{\mathbf{Z}}}.
\end{equation} 
We invite the reader to compare this to Example \ref{EllipticCurves}. A paper in this area that is close on certain levels to ours 
is \cite{Lepage2010}, where it is shown that the metric structure of the skeleton $\Sigma$ of a Mumford curve $X$ can be recovered from its tempered fundamental group for fields of mixed characteristic. To do this, one studies $\mathbf{Z}/p^{n}\mathbf{Z}$-torsors of $X$ using the universal topological covering space $\Omega$ of $X$ and pullbacks along theta functions $\theta:\mathbf{G}_{m}\to\Omega$. 
Our study of these torsors is different in several ways. For instance, we do not restrict ourselves to Mumford curves, relying instead on the finer structure of the analytic Jacobian studied in \cite{Baker2014}. This then gives a relation with the component group or tropical Jacobian of the curve, a notion which does not seem to play any role in  \cite{Lepage2010}.  On the other hand, the torsors that are used in  \cite{Lepage2010} are more general in the sense that they allow wild ramification, whereas the ones we treat here belong to the milder class of residually tame coverings. 





A concept in this paper that seems new is the notion of a decomposition and inertia group of a subcomplex $\Sigma^{0}\subseteq{\Sigma}$. For a non-marked curve $X$ with $\Sigma^{0}=\Sigma$, the decomposition group can be seen as the closed subgroup of $\pi(\Sigma)$ corresponding to the topological coverings. 
We define more general inertia and decomposition groups in Section \ref{FundGroupSection} using the concepts of unramified and completely split coverings. Moreover, we show that there is a connection between the abelianizations of the groups $\pi(\Sigma)/\mathfrak{D}(\Sigma)$ and $\pi(\Sigma)/\mathfrak{I}(\Sigma)$ to the toric and connected parts of the Jacobian in Theorem \ref{TorsionUnramifiedCoverings1}. This result also seems new.


The simultaneous semistable reduction theorem we prove in Theorem \ref{MainThm2} is a generalization of \cite[Theorem 2.3]{liu1}. There, the covering $X'\rightarrow{X}$ is assumed to be Galois and the Galois group $G$ satisfies the tameness condition $p\nmid{|G|}$. In terms of this paper, we say that the covering is Galois-topologically tame, see Section \ref{AlgebraicAnalyticPrelim}. 
A possible proof for the Galois-topologically tame version of \ref{MainThm2} was suggested in \cite[Section 1.2.1]{Cohen2016} using results by Berkovich. 
We combine these results with the lifting results in \cite{ABBR1} to give a full proof in the more general residually tame case. We note however that this uses the semistable reduction theorem, so this does not give a stand-alone proof of this theorem. 
A proof of the analogous theorem in the discretely valued case can be found in \cite[Theorem 1.1]{NewtonPuiseuxSemSta}. The proof of this theorem does not rely on the semistable reduction theorem.    

\section{Preliminaries}\label{Preliminaries}
In this section we give a summary of the algebraic and analytic results that we will need in the rest of the paper. We start by studying the concept of residual tameness and Galois coverings of analytic spaces. 
In Section \ref{OverviewSection} we review the lifting results in \cite{ABBR1} and we point out what adjustments have to be made. 

\subsection{Topological and residual tameness}\label{AlgebraicAnalyticPrelim}

We will use the following notation throughout this paper:
\begin{enumerate}
\item $K$ is a complete, algebraically closed non-archimedean field with non-trivial valuation $\mathrm{val}:K\rightarrow{\mathbf{R}\cup\{\infty\}}$ and value group $\Lambda:=\mathrm{val}(K^{*})$,
\item $R$ is its valuation ring,
\item $\mathfrak{m}_{R}$ is its maximal ideal,
\item $k=R/\mathfrak{m}_{R}$ is its residue field and
\item $\varpi$ is any element in $K$ with $\mathrm{val}({\varpi})>0$. 
\end{enumerate}

We use $p$ to denote the characteristic of the residue field. Throughout the paper, we will impose tameness conditions of the form "$p\nmid{n}$" or "$n$ is coprime to $p$", for $n$ an integer. For $p=0$ these are void conditions.  
A curve $X$ over $K$ is a smooth  
proper scheme of finite type over $K$ of dimension $1$. We will also refer to a curve as an algebraic curve. 
A marked curve $(X,D)$ is a curve $X/K$ together with a finite set 
$D\subset{X(K)}$. 
A morphism of curves is a finite separable morphism $\phi:X'\rightarrow{X}$ of schemes over $K$. 
A morphism of marked curves $(X',D')\rightarrow{(X,D)}$ is a morphism of curves that is \'{e}tale on $X'\backslash{D'}$, where $\phi^{-1}(D)=D'$. Instead of morphisms, we will also call these coverings.  
A semistable model for a curve $X/K$ is a triplet $(\mathcal{X},\pi,\psi)$, where $\mathcal{X}$ is a scheme,  
 $\pi:\mathcal{X}\rightarrow{\mathrm{Spec}(R)}$ is a flat, proper morphism such that the special fiber $\mathcal{X}_{s}$ is reduced and only has ordinary double singularities, and $\psi:\mathcal{X}_{\eta}\simeq{X}$ is an isomorphism (here $\mathcal{X}_{\eta}$ is the generic fiber of $\mathcal{X}$).   
We will denote a semistable model by $\mathcal{X}$. A strongly semistable model for $X$ is a semistable model such that the irreducible components of the special fiber are smooth. 

An analytic space over $K$ is as in \cite{Berkovich1993} and \cite{Temkin2}. We will only be needing good analytic spaces, which are analytic spaces such that every point has an affinoid neighborhood. For these spaces, 
we can freely use 
the material in \cite{berkovich2012} and \cite{Berkovich1993}. We will use the notation and definitions adopted in \cite[Sections 2-4]{BPRa1} for annuli, disks (which are called balls there), semistable vertex sets and skeleta. 

Let $L$ be a complete non-archimedean field over $K$. To introduce the notion of $L$-valued points of an analytic space over $K$, we use the 
ground field extension functor. 
This sends an analytic space $U$ over $K$ to the analytic space $U_{L}:=U\hat{\otimes}_{K}{L}$ over $L$. An $L$-valued point of $U$ is then a morphism $\mathcal{M}(L)\to{U_{L}}$ of $L$-analytic spaces. We denote the set of all such $L$-valued points by $U(L)$. 
For a point $x\in{U}$, we denote the completed residue field by $\mathcal{H}(x)$. 
Using the following lemma, we can equivalently view $L$-valued points of $U$ as points $x\in{U}$ together with a bounded $K$-homomorphism $\mathcal{H}(x)\to{L}$. We leave its proof to the reader, see \cite[Section 1.4]{Berkovich1993} and \cite[Section 4.1.6]{Temkin2} for more information.    


\begin{lemma}\label{ValuedPoints}
Let $L$ be a complete non-archimedean field over $K$ and let $U$ be an analytic space over $K$. Then there is a bijection between the set $U(L)$ and 
the set of points $x\in{U}$ together with a bounded $K$-homomorphism $\mathcal{H}(x)\to{L}$.\footnote{Note that this morphism is bounded if and only if it defines an isometric embedding.}
\end{lemma}

For any morphism $U\to{U'}$ of $K$-analytic spaces, we obtain an induced morphism 
\begin{equation}
U\hat{\otimes}_{\mathcal{M}(K)}\mathcal{M}(L)\to{U'\hat{\otimes}_{\mathcal{M}(K)}\mathcal{M}(L)}
\end{equation} of $L$-analytic spaces. By composing morphisms, we then obtain a map $U(L)\to{U'(L)}$. 
If we fix an analytic space $U$ and allow $L$ to vary, then this induces a functor from the  
category of complete non-archimedean field extensions of $k$ to the category of sets. In terms of this language, a morphism $U\to{U'}$ induces a natural transformation of the corresponding functors.      

For any algebraic curve $X$ as above, we denote its Berkovich analytification in the sense of \cite{berkovich2012} by $X^{\mathrm{an}}$. 
For any covering 
$\phi:X'\rightarrow{X}$ of curves over $K$, we obtain from the analytification functor a finite morphism of analytic spaces $\phi^{\mathrm{an}}:X'^{\mathrm{an}}\rightarrow{X^{\mathrm{an}}}$.  

\begin{mydef}{\it{(Topological and residual tameness)}}\label{AnalyticallyTame}
Let $X$ be a curve over $K$ and let $\phi:X'\rightarrow{X}$ be a morphism of curves with analytification $\phi^{\mathrm{an}}:X'^{\mathrm{an}}\rightarrow{X^{\mathrm{an}}}$. 
Let $x'\in{X^{\mathrm{an}}}$ with $\phi^{\mathrm{an}}(x')=x$  
and consider the extension of completed residue fields
\begin{equation}
i_{x'}:\mathcal{H}(x)\rightarrow{\mathcal{H}(x')}.
\end{equation}
We say that 
\begin{enumerate}
\item $\phi^{\mathrm{an}}$ is residually tame at $x'$ if $i_{x'}$ is a tame extension of valued fields.  
\item $\phi^{\mathrm{an}}$ is topologically tame at $x'$ if $[\mathcal{H}(x'):\mathcal{H}(x)]$ is coprime to $p:=\mathrm{char}(k)$. 
\end{enumerate}
We say that $\phi$ or $\phi^{\mathrm{an}}$ is {{topologically tame}} (resp. residually tame) if $\phi^{\mathrm{an}}$ is topologically tame (resp. residually tame) at every point of $X'^{\mathrm{an}}$. A morphism of marked curves $(X',D')\to(X,D)$ is topologically (resp. residually) tame if the underlying morphism $X'\to{X}$ has the corresponding property.   
\end{mydef}
These concepts can also be found in \cite{TEMKIN2017}, \cite{Cohen2016} and \cite{Berkovich1993}.\footnote{A tame extension of valued fields is called a moderately ramified extension in \cite{Berkovich1993}.} 
If $\phi^{\mathrm{an}}$ is topologically tame at a point $x'$, then it is also residually tame at that point, but the converse is not necessarily true, see Example \ref{SeparableExample}. 
For any point $x'$ at which $\phi^{\mathrm{an}}$ is residually tame and \'{e}tale, we have the following relation by \cite[Proposition 2.4.7]{Berkovich1993}:  
\begin{equation}\label{IndicesResidueFields}
[\mathcal{H}(x'):\mathcal{H}(x)]=[\tilde{\mathcal{H}}(x'):\tilde{\mathcal{H}}(x)][|\mathcal{H}(x')^{*}|:|\mathcal{H}(x)^{*}|]
\end{equation}
The morphism $\phi^{\mathrm{an}}$ is then topologically tame at $x'$ if both of the indices in Equation \ref{IndicesResidueFields} are not divisible by $\mathrm{char}(k)$. 

\begin{exa}\label{SeparableExample}
We give an example of a covering of curves that is residually tame and \'{e}tale, but not topologically tame. Consider an elliptic curve $E$ over $\mathbf{C}_{2}$ with ordinary reduction, meaning that there is a model $\mathcal{E}/R$ with good reduction such that $\mathcal{E}[2](\overline{\mathbf{F}}_{2})=\mathbf{Z}/2\mathbf{Z}$. To be more explicit, consider the elliptic curve over $\mathbf{C}_{2}$ given by the equation
\begin{equation}\label{OrdinaryCurve}
y^2+xy=x^3+1.
\end{equation} 
The projective homogenization of this equation in $\mathbf{P}^{2}_{R}$ gives a model $\mathcal{E}/R$ with the desired reduction. The non-trivial $2$-torsion point in the special fiber is given in local coordinates by $\overline{P}=(0,1)$.

We denote the special fiber of $\mathcal{E}$ by $\overline{E}$. 
Since $\overline{E}$ has a non-trivial $2$-torsion point, there is a unique \'{e}tale morphism $\overline{E}'\rightarrow{\overline{E}}$ of degree two up to $\overline{E}$-isomorphism. More explicitly, it is the isogeny dual to the Frobenius morphism. Using \cite[Theorem 7.4]{ABBR1}, we see that there is a lift of the morphism $\overline{E}'\rightarrow{\overline{E}}$ to a finite 
morphism of semistable models $\mathcal{E}'\rightarrow{\mathcal{E}}$. The generic points $\eta'$ and $\eta$ of the special fiber of $\mathcal{E}'$ and $\mathcal{E}$ correspond to type-$2$ points $x'$ and $x$ of the Berkovich spaces $E'^{\mathrm{an}}$ and $E^{\mathrm{an}}$. The induced morphism $\tilde{\mathcal{H}}(x)\rightarrow{\tilde{\mathcal{H}}(x')}$ is just the map of function fields $\overline{\mathbf{F}}_{2}(E)\rightarrow{\overline{\mathbf{F}}_{2}(E')}$, which is separable by construction. We thus see that the morphism $E'^{\mathrm{an}}\rightarrow{E^{\mathrm{an}}}$ is residually tame over $x$. The morphism splits completely over the other points (which can be seen using \cite[Theorem 4.35]{ABBR1}), so it defines a residually tame covering. Note that it is not topologically tame, since $[\mathcal{H}(x'):\mathcal{H}(x)]=2$.

\end{exa}

\begin{rem}
We note one important difference between the usual notion of tameness for non-archimedean fields and the notion of topological tameness. For a finite separable extension of complete non-archimedean fields $K\subset{L}$, the tameness of $L$ implies the tameness of the Galois closure $\overline{L}$ over $K$. The same is not necessarily true for topological tameness. To see this, we first note that 
the factor $[|\mathcal{H}(x')^{*}|:|\mathcal{H}(x)^{*}|]$ will not be divisible by $\mathrm{char}(k)=p$ after passing to the Galois closure, see \cite[Section 2.4]{Berkovich1993}. However, the degree $[\tilde{\mathcal{H}}(x'):\tilde{\mathcal{H}}(x)]$ of the extension of reductions can easily become divisible by $p$ after passing to the Galois closure. In the proof of Theorem \ref{MainThm2} we will show that in certain important cases the residual tameness does imply the topological tameness of the Galois closure.     
\end{rem}

\begin{mydef}\label{DefinitionGResTame}
{\it{(Galois-topologically tame coverings)}} Let $X'\rightarrow{X}$ be a covering of connected curves with extension of function fields $K(X)\rightarrow{K(X')}$. Let $K(\overline{X})$ be the Galois closure of this extension and let $\overline{X}\rightarrow{X}$ be the corresponding covering of connected curves. We say that $X'\rightarrow{X}$ is Galois-topologically tame if $\overline{X}\rightarrow{X}$ is topologically tame.  
\end{mydef}

Note that this definition was also used in \cite[Section 6.3]{Berkovich1993} to compare coverings of algebraic curves and coverings of Berkovich spaces. It also implicitly plays a role in 
\cite[Theorem 2.3]{liu1}, where the coverings are Galois with Galois group $G$ and $p\nmid{|G|}$. 


\vspace{0.3cm}

We now study the process of moving from an algebraic Galois covering to an analytic Galois covering. Our main goal here is to give an explicit description of the completed residue field $\mathcal{H}(\overline{x})$ for a point $\overline{x}\in{\overline{X}^{\mathrm{an}}}$, where $\overline{X}$ is the Galois closure of a covering $X'\rightarrow{X}$. The answer will be given in Proposition \ref{GaloisClosureCriterion}. We start with some preliminaries on group actions. 

\begin{mydef}{\it{(Galois coverings of analytic spaces)}} \label{GaloisCovering}
Let $U$ be an analytic space over $K$ 
and let $\psi:V\rightarrow{U}$ be a finite \'{e}tale morphism. We say that $\psi$ is a Galois covering if there exists a finite group $G$ and a homomorphism $G\rightarrow{\mathrm{Aut}_{U}(V)}$ such that the induced map 
$G\times_{U}V\rightarrow{V\times_{U}V}$ is an isomorphism. Here we consider $G$ as a group object over $U$, so it is a disjoint union of $|G|$ copies of $U$. \end{mydef}

Galois coverings of analytic spaces arise naturally from coverings of algebraic curves, as we now explain. 
Let $\overline{\phi}:\overline{X}\rightarrow{X}$ be a covering of connected curves and suppose that the corresponding extension of function fields $K(X)\rightarrow{K(\overline{X})}$ is Galois. 
Write $U=X\backslash{D}$, where $D$ is the branch locus of $\overline{\phi}$ and $\overline{U}=\overline{X}\backslash{D'}$ for $D'=\phi^{-1}(D)$. The morphism $\overline{U}\rightarrow{U}$ is then \'{e}tale, which implies that 
$\overline{U}^{\mathrm{an}}\rightarrow{U^{\mathrm{an}}}$ is \'{e}tale. Furthermore, we have an action $G\times_{U}{\overline{U}}\rightarrow{\overline{U}}$ and $\overline{U}$ is a $G$-torsor under this action. That is, it induces an isomorphism  
\begin{equation}
G\times_{U}{\overline{U}}\rightarrow{\overline{U}\times_{U}{\overline{U}}}.
\end{equation} 
This implies the analytification $G\times_{U^{\mathrm{an}}}{\overline{U}}^{\mathrm{an}}\rightarrow{\overline{U}^{\mathrm{an}}\times_{U^{\mathrm{an}}}\overline{U}^{\mathrm{an}}}$ is also an isomorphism (here we again write $G$ for the constant group object). It follows that $\overline{U}^{\mathrm{an}}\rightarrow{U^{\mathrm{an}}}$ is a Galois covering in the sense of Definition \ref{GaloisCovering}. Now consider a point $x\in{U^{\mathrm{an}}}$. This defines a morphism $\mathcal{M}(\mathcal{H}(x))\rightarrow{U^{\mathrm{an}}}$ and the fiber product $Z:=\mathcal{M}(\mathcal{H}(x))\times_{U^{\mathrm{an}}}{\overline{U}^{\mathrm{an}}}$ can be identified with the Berkovich spectrum of a finite $\mathcal{H}(x)$-algebra $\mathcal{A}$. 
 We now have an induced action $G\times_{\mathcal{H}(x)}Z\rightarrow{Z}$ and this gives an isomorphism
\begin{equation}
G\times_{\mathcal{H}(x)}Z\rightarrow{Z\times_{\mathcal{H}(x)}Z}.
\end{equation}  
Viewing this as a functor (see Lemma \ref{ValuedPoints}), this says that the action of $G$ on every geometric point of $Z$ is simply transitive. 
By the anti-equivalence between $K$-affinoid algebras and $K$-affinoid Berkovich analytic spaces (see \cite[3.3.8]{Temkin2}), this quickly implies that the algebra $\mathcal{A}$ has $|G|$ distinct $\mathcal{H}(x)$-automorphisms. This number is equal to the dimension of this $\mathcal{H}(x)$-vector space, since it is the degree. We thus obtain 
$\mathcal{A}^{\mathrm{G}}=\mathcal{H}(x)$. 
Using the fact that the action is simply transitive on the geometric points of $Z$,  
we then easily conclude that every individual factor $\mathcal{H}(\overline{x})$ of $\mathcal{A}$ is Galois over $\mathcal{H}(x)$ with Galois group the decomposition group $D_{\overline{x}/x}=\{\sigma\in{G}:\sigma(\overline{x})=\overline{x}\}$ of 
$\overline{x}$ over $x$. 

\begin{prop}\label{GaloisClosureCriterion}
Let $\phi:X'\rightarrow{X}$ be a morphism of connected curves with Galois closure $\overline{X}\rightarrow{X}$ and consider a point $x\in{X^{\mathrm{an}}}$ over which $\overline{\phi}^{\mathrm{an}}$ is \'{e}tale. We denote its completed residue field by $\mathcal{H}(x)$. Let $x_{i}$ be the points in $X'^{\mathrm{an}}$ lying over $x$ and let $\overline{x}$ be a point in $\overline{X}^{\mathrm{an}}$ lying over $x$. The extension $\mathcal{H}(\overline{x})\supset{\mathcal{H}(x)}$ is then Galois. The composite of the $\mathcal{H}(x)$-embeddings  
\begin{equation}
\mathcal{H}(x_{i})\rightarrow{\mathcal{H}(\overline{x})}
\end{equation} 
arising from the transitive action on the fiber over $x$ is 
 $\mathcal{H}(\overline{x})$. 
\end{prop} 
\begin{proof}
Given the earlier-mentioned results on the $\mathcal{H}(x)$-algebra $\mathcal{A}$, the verification of the proposition is an exercise in Galois theory (but in the context of separable $\mathcal{H}(x)$-algebras). We leave the details to the reader. 

\end{proof}

\begin{cor}
\label{GaloisClosureCorollary}
Consider the set-up as in Proposition \ref{GaloisClosureCriterion} and let $\overline{\mathcal{H}}(x_{i})$ be a Galois closure of $\mathcal{H}(x_{i})$ over $\mathcal{H}(x)$. Suppose that $p\nmid{[\overline{\mathcal{H}}(x_{i}):\mathcal{H}(x)]}$ for every $i$. Then $p\nmid[\mathcal{H}(\overline{x}):\mathcal{H}(x)]$. 
\end{cor}
\begin{proof}
This follows directly from Proposition \ref{GaloisClosureCriterion} and the fact that the Galois group of a composite of Galois extensions is a quotient of the direct product of the individual Galois groups. 
\end{proof}

\begin{cor}\label{CorollaryResidualGalois}
Consider the set-up as in Proposition \ref{GaloisClosureCriterion} and suppose that $\phi$ is residually tame. Then the Galois closure $\overline{\phi}$ is residually tame. 
\end{cor}
\begin{proof}
By \cite[Section 2.4]{Berkovich1993}, we find that if $\mathcal{H}(x_{i})\supset{\mathcal{H}(x)}$ is tame, then the Galois closure is also tame and any composite of tame extensions is again tame. By Proposition \ref{GaloisClosureCriterion}, we then see that $\mathcal{H}(\overline{x})$ is also tame for any point $\overline{x}$ lying over $x$ in the Galois closure $\overline{X}$ of $X'$ over $X$. 
\end{proof}


\begin{lemma}\label{CompositeTame}
Suppose that $K(X)\subset{K(X_{1})}\subset{\overline{K(X)}}$ and $K(X)\subset{K(X_{2})}\subset{\overline{K(X)}}$ are function field extensions in a fixed algebraic closure $\overline{K(X)}$ of $K(X)$ corresponding to residually tame (resp. Galois-topologically tame) coverings of $(X,D)$. Consider their composite $M=K(X_{1})\cdot{K(X_{2})}$. 
Then $M$ is residually tame (resp. Galois-topologically tame).
\end{lemma}
\begin{proof}
By Corollaries \ref{GaloisClosureCorollary} and \ref{CorollaryResidualGalois}, the Galois closures of both $K(X_{i})/K(X)$ are residually tame (resp. Galois-topologically tame). It suffices now to show that the composite of the Galois closures has the desired property. We thus assume that the $X_{i}$ are Galois, and we write $\overline{X}$ for the curve corresponding to the composite $M$ of the $K(X_{i})$. Let $\overline{x}\in\overline{X}^{\mathrm{an}}$ be a point in the \'{e}tale locus and consider its image $x_{i}\in{X^{\mathrm{an}}_{i}}$. Writing $H_{i}$ for $\mathrm{Gal}(M/K(X_{i}))$, we then have that $\mathcal{H}(x_{i})\subset{\mathcal{H}(\overline{x})}$ is Galois, with Galois group $D_{\overline{x}/x}\cap{H_{i}}$. 
The composite of the $\mathcal{H}(x_{i})$ under these embeddings corresponds to the intersection of these subgroups $D_{\overline{x}}\cap{H_{i}}$. But we have $H_{1}\cap{H_{2}}=(1)$ since $M$ is the composite of the $K(X_{i})$, so we find that $\mathcal{H}(\overline{x})$ is the composite of the $\mathcal{H}(x_{i})$. Furthermore, tame extensions are stable under taking composites, 
so we conclude that $\mathcal{H}(\overline{x})\supset{\mathcal{H}(x)}$ is also residually tame.  Since $\overline{x}$ was arbitrary, we conclude that $\overline{X}\rightarrow{X}$ is residually tame. Similarly, for Galois-topologically tame coverings, we conclude that $[\mathcal{H}(\overline{x}):\mathcal{H}(x)]$ is not divisible by $p$, because $\mathcal{H}(\overline{x})$ is the composite of two Galois extensions $\mathcal{H}(x_{i})\supset{\mathcal{H}(x)}$ with $[\mathcal{H}(x_{i}):\mathcal{H}(x)]$ not divisible by $p$. 
\end{proof}

We now consider the composite $M_{\mathrm{Tame}}$ (resp. $M_{\mathrm{GTop}}$) of all function fields $K(X')$ corresponding to residually tame (resp. Galois-topologically tame) coverings $(X',D')\rightarrow{(X,D)}$ of a fixed marked curve $(X,D)$. 
Using Corollary \ref{CorollaryResidualGalois} and Lemma \ref{CompositeTame}, we easily see that $M_{\mathrm{Tame}}$ and $M_{\mathrm{GTop}}$ are Galois. 

\begin{lemma}\label{ResiduallyTameLemma}
Let $K(X)$ be the function field of a curve $X$ with a fixed set of marked points $D\subset{X(K)}$. 
Consider the composite $M_{\mathrm{Tame}}$ (resp. $M_{\mathrm{GTop}}$) of all function field extensions $K(X')\supset{K(X)}$ arising from residually tame (resp. Galois-topologically tame) coverings $(X',D')\rightarrow{(X,D)}$. 
Then $M_{\mathrm{Tame}}$ and $M_{\mathrm{GTop}}$ are Galois. 
\end{lemma}
\begin{proof}
Let $x$ be an element of $M_{\mathrm{Tame}}$ or $M_{\mathrm{GTop}}$ and let $L$ be the field generated by $x$ over $K(X)$. By Lemma \ref{CompositeTame}, $L$ is residually tame (resp. Galois-topologically tame) over $K(X)$. 
By Corollary \ref{CorollaryResidualGalois}, we then see that the Galois closure $\overline{L}$ over $K$ is residually tame. For Galois-topologically tame coverings, $\overline{L}$ is automatically topologically tame. The extensions $M_{\mathrm{Tame}}$ and $M_{\mathrm{GTop}}$ are thus Galois. 
\end{proof}

We denote the Galois groups of the extensions in Lemma \ref{ResiduallyTameLemma} by $\pi_{\mathrm{Tame}}(X,D)$ and $\pi_{\mathrm{GTop}}(X,D)$. We view these as the fundamental groups of suitable Galois categories. For more background regarding the notion of a Galois category, we refer the reader to \cite[\href{https://stacks.math.columbia.edu/tag/0BMQ}{Tag 0BMQ}]{stacks-project}, \cite{SGA1},  \cite{Lenstra} and \cite{Cadoret2013}.  

\begin{mydef}\label{GaloisCategoryResiduallyTame1}
{\it{(Galois-topologically and residually tame coverings)}} 
Let $X/K$ be a connected curve with a set of marked points $D$. 
We write $\mathrm{Cov}(X,D)$ for the category of all finite \'{e}tale coverings of $X\backslash{D}$, or equivalently, 
the category of coverings of the marked curve $(X,D)$. 
The full subcategories of $\mathrm{Cov}(X,D)$ of all coverings that are residually (resp. Galois-topologically) tame are denoted by $\mathrm{Cov}_{\mathrm{Tame}}(X,D)$ and $\mathrm{Cov}_{\mathrm{GTop}}(X,D)$ respectively. By Lemma \ref{ResiduallyTameLemma}, these are Galois categories with profinite fundamental groups $\pi_{\mathrm{Tame}}(X,D)$ and $\pi_{\mathrm{GTop}}(X,D)$.    
\end{mydef}


\begin{rem}
Throughout this paper, we suppress the base-points in the profinite fundamental groups $\pi(X,x)$ since they are not of any significant relevance to us.  
\end{rem}

\subsection{The lifting results in \cite{ABBR1}}\label{OverviewSection}

In this section, we point out how the notions in 
\cite{ABBR1} have to be modified to allow for disconnected metrized complexes and morphisms thereof. We leave it to the reader to write out their formal definitions. 
The notions of $\Lambda$-metric graphs, augmented metric graphs and metrized complexes from \cite{ABBR1} transfer directly to the disconnected case. 
For morphisms, we do the following. We start with a continuous map of $\Lambda$-metric graphs $\phi:\Sigma'\rightarrow\Sigma$, where $\Sigma'$ and $\Sigma$ can now be disconnected. Since the image of a connected subset is again connected, we have induced maps 
\begin{equation}
\phi_{i,j}:\Sigma'_{i,j}\rightarrow\Sigma_{i},
\end{equation}   
where the $\Sigma$ are the connected components of $\Sigma$ and similarly for $\Sigma'$. The definitions in  \cite{ABBR1} can now be directly generalized using the $\phi_{i,j}$, which are maps of connected $\Lambda$-metric graphs. 
Note that a harmonic morphism of $\Lambda$-metric graphs is not necessarily surjective anymore, but if we restrict to the connected components of $\Sigma$ that are in the image, then it is again automatically surjective. 

\begin{rem}
Throughout this paper, we will assume that our metrized complexes $\Sigma$ are {\it{loopless}} unless mentioned otherwise. That is, $\Sigma$ contains no edges whose closure is homeomorphic to a circle. The most important exception to this assumption is in Section \ref{SectionCoverings}, where the skeleta are not  assumed to be loopless (or equivalently, the semistable vertex sets are not necessarily strongly semistable). This gives a slightly more general tame simultaneous semistable reduction theorem. Note that for a strongly semistable model $\mathcal{X}$, we automatically have that the corresponding metrized complex $\Sigma(\mathcal{X})$ is loopless. 
\end{rem}
Suppose now that we have a possibly disconnected curve $X/K$ with connected components $X_{i}$. We have  
$$X^{\mathrm{an}}=(\coprod{X_{i}})^{\mathrm{an}}=\coprod{(X_{i})^{\mathrm{an}}}.$$
We can then introduce the notions of semistable vertex sets, triangulated marked curves (which are called triangulated punctured curves in \cite{ABBR1}) and skeleta using their corresponding counterparts from the connected case. Morphisms of these objects are then similarly transported to the disconnected case. The gluing data introduced in \cite[Section 7]{ABBR1} is generalized by taking products of the gluing data on the connected components. We denote this gluing data by $\mathcal{G}(\Sigma',X)$, where $\Sigma'\rightarrow{\Sigma}$ is a map of metrized complexes of $k$-curves and $\Sigma$ is a fixed skeleton of $(X,{D})$. 

\begin{rem}\label{IdentificationGluingData547}
By definition, a set of gluing data is given by isomorphisms 
$$\tau^{-1}_{x'}(e'^{o})\to\tau^{-1}_{y'}(e'^{o}),$$
see \cite[Section 7]{ABBR1}. This however gives us the possibility to compose any such isomorphism with an automorphism of the annulus in question. For instance, for any $u\in{R^{*}}$, we obtain a non-trivial automorphism by multiplying a parameter $t$ by $u$.  The automorphisms in \cite[Lemma 6.10]{ABBR1} are an example of this with $u=\zeta_{n}$ a primitive root of unity. We now see that the annuli $\tau^{-1}_{x'}(e'^{o})$ and $\tau^{-1}_{y'}(e'^{o})$ afford infinitely many distinct automorphisms and this implies that the set of gluing data is infinite. This seems to contradict the results in \cite{ABBR1}, so we instead require these isomorphisms to be $\tau^{-1}(e^{o})$-isomorphisms (as is already done implicitly on several occasions in \cite{ABBR1}, see Example 7.8 where the set of gluing data is of order four). For these $\tau^{-1}(e^{o})$-isomorphisms, the gluing data is finite, giving $d_{e'/e}(\phi)$ possibilities per edge. 
We now see that we can identify the set of gluing data $\mathcal{G}(\Sigma',X)$ with the set $\prod_{e'\in{E(\Sigma')}}\mathbf{Z}/d_{e'/e}(\phi)\mathbf{Z}$. This will be used in Section \ref{AlgebraicDefinition1} for the algebraic definition of gluing data. 
\end{rem}

\begin{rem}\label{RemarkOrientation}
Our notation for oriented edges of $\Sigma$ will be slightly different from the one used in \cite{ABBR1}. An oriented edge of $\Sigma$ will be denoted by $\overrightarrow{e}=xy$, where $x$ and $y$ are the endpoints and $\overrightarrow{e}$ starts at $x$ and ends at $y$. The reversed edge will be denoted by $\overleftarrow{e}=yx$.  
\end{rem}

\section{A simultaneous semistable reduction theorem for tame coverings}\label{SectionCoverings}

In this section, we prove a simultaneous semistable reduction theorem for residually tame coverings of an algebraic curve $X$. 
The theorem will be used to construct a functor from the category of residually tame coverings of $(X,D)$ to the category of 
 tame coverings of a metrized compled $\Sigma$ associated to $(X,D)$. 

\vspace{0.3cm}

Let $\mathcal{X}$ be a (strongly) semistable model for $X/K$ and consider a $K$-rational point $P:\mathrm{Spec}(K)\rightarrow{X}$. By the valuative criterion of properness, this gives rise to a unique $P_{R}:\mathrm{Spec}(R)\rightarrow{\mathcal{X}}$ such that the composition of $P_{R}$ with $\mathrm{Spec}(K)\rightarrow{\mathrm{Spec}(R)}$ is $P$ (here we identify the generic fiber of $\mathcal{X}$ with $X$). Taking the composition of $P_{R}$ with the map $\mathrm{Spec}(k)\rightarrow{\mathrm{Spec}(R)}$, this gives a $k$-rational point $\overline{P}:\mathrm{Spec}(k)\rightarrow{\mathcal{X}_{s}}$. We call the map $\mathrm{red}:X(K)\rightarrow{\mathcal{X}_{s}(k)}$ that takes $P$ to $\overline{P}$ 
the reduction map. We now define a (strongly) semistable model for a marked curve $(X,D)$ to be a (strongly) semistable model for $X$ such that the restriction of the reduction map to $D$ gives an injection $D\rightarrow{\mathcal{X}^{\mathrm{sm}}_{s}(k)}$. Here $\mathcal{X}^{\mathrm{sm}}_{s}$ is the smooth part of $\mathcal{X}_{s}$. 
We can now state the simultaneous semistable reduction theorem for residually tame coverings in terms of marked semistable models as follows. 

\begin{reptheorem}{MainThm2}
Let $\phi:X'\rightarrow{X}$ be a residually tame covering of smooth proper connected algebraic curves over $K$ and let $\mathcal{X}$ be a (strongly) semistable model for $(X,D)$, where $D$ is the branch locus of $\phi$. 
Let $\mathcal{X}'$ be the normalization of $\mathcal{X}$ in the function field $K(X')$. Then $\mathcal{X}'$ is (strongly) semistable and $\mathcal{X}'\rightarrow{\mathcal{X}}$ is a finite morphism of (strongly) semistable models over $R$.
\end{reptheorem}

\begin{rem}
The parentheses around the word strongly mean that if we add the assumption that $\mathcal{X}$ be strongly semistable, then $\mathcal{X}'$ is also strongly semistable. We will also use this convention for formal strongly semistable models and strongly semistable vertex sets.  
\end{rem}

\begin{rem}\label{RemarkEquivalent3}
We will prove the Berkovich-theoretic version of Theorem \ref{MainThm2} that was given in the introduction. We quickly explain how the version in the introduction implies the one given here, we leave the other implication to the reader. 
Let $\mathcal{X}$ be a (strongly) semistable model of $(X,D)$. By taking the $\varpi$-adic completion of this model, we obtain a formal (strongly) semistable model $\mathcal{X}_{\mathrm{form}}$. This formal model in turn corresponds to a (strongly) semistable vertex set $V$ of $(X,D)$ by \cite[Theorem 5.8]{ABBR1}. We will prove that the inverse image of $V$ in $X'^{\mathrm{an}}$ is a (strongly) semistable vertex set $V'$ of $(X',D')$, where $D'=\phi^{-1}(D)$. By \cite[Theorem 5.13]{ABBR1}, there is then a finite morphism $\mathcal{X}'_{\mathrm{form}}\rightarrow{\mathcal{X}_{\mathrm{form}}}$ between the corresponding formal (strongly) semistable models and this corresponds to a finite morphism $\mathcal{X}'\rightarrow{\mathcal{X}}$ of (strongly) semistable models. We now note that a semistable model is automatically normal. Indeed, let $U=\mathrm{Spec}(A)$ be an open affine in $\mathcal{X}$. 
Since $X$ is normal, we know that $A\otimes_{R}{K}$ is normal. If $a\in{\mathrm{Frac}(A)}$ is integral over $A$, we then have $a\in{A\otimes_{R}{K}}$. We now use the natural supremum norm on $A\otimes_{R}{K}\subset{\hat{A}}\otimes_{R}{K}$, where $\hat{A}$ is the $\varpi$-completion of $A$. Since integral elements are power-bounded (see \cite[Theorem 3.1.17]{Bosch2014}), we find that $|a|\leq{1}$. But $\hat{A}$ is equal to the power-bounded elements in ${\hat{A}}\otimes{K}$ by \cite[Proposition 3.13]{BPR11} (this is where we use the reducedness), so $a\in{\hat{A}\cap{(A\otimes_{R}{K})}}=A$, as desired. 
 We thus find that $\mathcal{X}'$ is normal, which together with the finiteness over $\mathcal{X}$ implies that it is equal to the 
 normalization of $\mathcal{X}$ in $K(X')$.     

\end{rem}

To prove Theorem \ref{MainThm2}, we first recall the following important theorem by Berkovich. 
\begin{theorem}\label{BerkovichTheorem2}
{\it{(Topologically tame finite \'{e}tale Galois coverings of open disks and annuli)}}
\begin{enumerate}
\item Let $U$ be an open disk and let $\psi:V\rightarrow{U}$ be a finite topologically tame \'{e}tale Galois covering. 
Then $V$ is a disjoint union of open disks.
\item Let $U$ be an open annulus, let $V$ be connected and let $V\rightarrow{U}$ be a finite topologically tame \'{e}tale Galois covering. 
Then there exist isomorphisms ${\mathbf{S}(a)_{+}}\simeq{U}$ and $V\simeq{\mathbf{S}_{+}(a^{1/n})}$ for $a\in{K}$ such that the composed map is given by $t\mapsto{t^{n}}$. 
\end{enumerate}
\end{theorem}
\begin{proof}
This follows from \cite[Theorems 6.3.2 and 6.3.5]{Berkovich1993} by writing $U$ as a union of closed disks or closed annuli. 
\end{proof}

\begin{proof}
{\it{(Of Theorem \ref{MainThm2})}} Let $\phi:X'\to{X}$ be the morphism of curves and let $\overline{\phi}:\overline{X}\to{X}$ be its Galois closure with Galois group $G$. By basic ramification theory, we have that $D$ is again the branch locus of $\overline{\phi}$, we write $\overline{D}=\overline{\phi}^{-1}(D)$ for the ramification locus. Throughout the proof, we will denote the analytified morphisms $X'^{\mathrm{an}}\to{X^{\mathrm{an}}}$ and $\overline{X}^{\mathrm{an}}\to{X^{\mathrm{an}}}$ by $\phi$ and $\overline{\phi}$ again to ease notation. Let $U$ be a connected component of $X^{\mathrm{an}}\backslash{V}$ that is an open annulus or an open disk. 
As before the proof of Lemma \ref{GaloisClosureCriterion}, we conclude that $\overline{\phi}^{-1}(U)\rightarrow{U}$ is a $G$-torsor and thus Galois (in particular, it is \'{e}tale). 
This also implies that the covering on any connected component of $\overline{\phi}^{-1}(U)$ is Galois, with Galois group the stabilizer of the component.  

Suppose now that $\overline{\phi}^{-1}(U)\rightarrow{U}$ is topologically tame. 
We then conclude by Theorem \ref{BerkovichTheorem2} that %
$\overline{\phi}^{-1}(U)$ is a disjoint union of open annuli or open disks. On the annuli, the covering is Kummer and on the open disks the covering is trivial.  This then also implies that the coverings $\phi^{-1}(U)\rightarrow{U}$ are of the above type. If we now consider the inverse image $V'$ of the semistable vertex set $V$ of $X^{\mathrm{an}}$ in $X'^{\mathrm{an}}$, then we see that the complement $X'^{\mathrm{an}}\backslash{V'}$ is a disjoint union of open annuli and open disks. In other words, $V'$ is a semistable vertex set for $X'^{\mathrm{an}}$. The strongly semistable case also follows, since the induced morphism of metrized complexes from Proposition \cite[Corollary 4.28]{ABBR1} is finite, and thus $\Sigma$ is loopless if and only if $\Sigma'$ is loopless. 

We now show that $\overline{\phi}$ is topologically tame over $U$. We first note that $\overline{\phi}$ and $\phi$ are topologically tame over any type-$3$ point.  Indeed, this follows from the fact that for any point $x'\in{X'^{\mathrm{an}}}$ lying over $x$, we have that $\mathcal{H}(x)\subset{\mathcal{H}(x')}$ is residually tame if and only if $p\nmid{[|\mathcal{H}(x')^{*}|:|\mathcal{H}(x)^{*}|]}$. The latter in turn follows from $\tilde{\mathcal{H}}(x')= \tilde{\mathcal{H}}(x)=K$ and \cite[Proposition 2.4.7]{Berkovich1993}. We conclude using Lemma \ref{GaloisClosureCorollary} that $\overline{\phi}$ is topologically tame at any type-$3$ point. 

Suppose now for a contradiction that $\overline{\phi}$ is not topologically tame over some type-$2$ point $x\in{U}$ and let $\overline{x}$ be a point where $\overline{\phi}$ fails to be topologically tame. We will do the case where $U$ is an open annulus, the open disk case is similar. We write $x'$ for its image in $X'^{\mathrm{an}}$. By the results in \cite{ABBR1}, we can find a pair of skeleta $\Sigma'_{1}$ and $\Sigma_{1}$ of $X'^{\mathrm{an}}$ and $X^{\mathrm{an}}$ respectively such that $x'\in{\Sigma'_{1}}$, $x\in{\Sigma_{1}}$ and $\phi^{-1}(\Sigma_{1})=\Sigma'_{1}$. This in turn induces a finite morphism of triangulated marked curves $(X',V'_{1},D')\rightarrow{(X,V_{1},D)}$. From the residual tameness of $\phi$, we conclude that the morphism of triangulated marked curves is a tame covering. 

Consider the retraction morphism $\tau_{\Sigma}:X^{\mathrm{an}}\rightarrow{\Sigma}$ with $y:=\tau_{\Sigma}(x)$. Note that the set of points in $\Sigma_{1}$ that retract to $y$ under $\tau_{\Sigma}$ is a tree. For any type-$2$ point in the vertex set of $\Sigma_{1}$ that retracts to $y$ (and not equal to $y$), we claim that the covering is trivial. Indeed, we can prove this inductively by starting at the leaf-vertices of $\Sigma_{1}$ retracting to $y$. Let $z$ be such a leaf-vertex and choose a tangent direction outside $\Sigma_{1}$. Since $\phi$ is piecewise linear, we can find for any such tangent direction a geodesic $l':[a,b]\rightarrow{X'^{\mathrm{an}}}$ such that the induced map $l:[am,bm]\rightarrow{X^{\mathrm{an}}}$ is a geodesic, see \cite[Definitions 4.4 and 4.21]{ABBR1}. Here $m$ is the dilation factor.   
Suppose now that $m$ is non-trivial in a direction outside $\Sigma_{1}$. Note that $m$ is equal to the local degree $[\mathcal{H}(w'):\mathcal{H}(w')]$ for a pair of points in a geodesic representing the tangent direction,  
see \cite[Propositions 2.2.1(2) and 2.5(1)]{BPRa1}. A geodesic representing this direction contains points of type $2$ and we obtain a contradiction using \cite[Proposition 4.35]{ABBR1}. Since $z$ is a leaf-vertex, there is only one direction in which the dilation factor can be non-trivial. But then the dilation factor has to be one, since 
this would otherwise give a tame covering of $\mathbf{P}^{1}$ ramified over only one point by \cite[Theorem 4.23(2)]{ABBR1}, a contradiction. We conclude that $\phi$ is split over $z$. We then inductively continue this argument and see that $\phi$ is split over all type-$2$ points retracting to $y$ and not equal to $y$. In particular, we must have $x=y=\tau_{\Sigma}(x)$, which implies that $x$ is a type-$2$ point in an edge of $\Sigma$. 

We now find that there are only two directions in which the dilation factor can be non-trivial. Indeed, by the previous argument the tangent directions belonging to $\Sigma_{1}$ are covered, and 
the covering is split over the other tangent directions. The tame covering of residue curves $C_{x'}\rightarrow{C_{x}=\mathbf{P}^{1}}$ is then ramified over exactly two points. This implies that $C_{x'}\rightarrow{\mathbf{P}^{1}}$ is a Kummer covering, of degree equal to the ramification degree in the two directions. This ramification degree is prime to $p$, since we can detect this using type-$3$ points. We conclude that $\mathcal{H}(x)\rightarrow{\mathcal{H}(x')}$ is Galois of degree coprime to $p$. By doing this for every point $x'$ in $X'^{\mathrm{an}}$ lying over $x$, we find that all the corresponding morphisms $\mathcal{H}(x)\rightarrow{\mathcal{H}(x')}$ for $x'$ lying over $x$ are Galois of degree coprime to $p$.    Using Lemma \ref{GaloisClosureCriterion}, we conclude that the degree of $\mathcal{H}(\overline{x})$ over $\mathcal{H}(x)$ is coprime to $p$ for any $\overline{x}$ over $x$, a contradiction. We conclude that $\phi$ is topologically tame over any type-$2$ point in $U$. Since the map $\overline{\phi}$ is piecewise linear, 
we then also easily conclude that $\overline{\phi}$ is topologically tame at points of type $1$ and $4$ in $U$. 

\end{proof}

\begin{rem}
For Galois-topologically tame coverings, the material in \cite{ABBR1} on lifting morphisms to skeleta is not necessary, as we are done after the first paragraph of the proof. Since the theorems by Berkovich do not rely on any semistable reduction theorem, this gives a stand-alone proof of this theorem. The material in \cite{ABBR1} on lifting morphisms however does rely on the semistable reduction theorem, so the second part does not give a stand-alone proof of this theorem for residually tame coverings. In closing, we note that the conclusion in the proof of Theorem \ref{MainThm2} is similar to that of \cite[Lemma 3.4.2]{TEMKIN2017}. Namely, if we have a residually tame morphism together with a skeleton of that morphism, then this morphism splits outside the skeleton. 
\end{rem}

\section{Enhanced tame coverings of metrized complexes}\label{RigidifiedCoverings}

In this section we introduce the category of {{enhanced}} tame coverings of a metrized complex $\Sigma$. 
These coverings consist of a tame covering of a metrized complex, together with a set of gluing data. Adding this set of gluing data gives a unique lift to the category of algebraic coverings by the results in \cite{ABBR1}, and we show that the category of residually tame coverings is equivalent to the category of enhanced tame coverings of a metrized complex.    

\subsection{Enhanced tame coverings}\label{RigidMorphisms2}

We now introduce the notion of an {\it{enhanced covering}} of a metrized complex $\Sigma$ and the notion of a morphism of enhanced coverings. This gives a category $\mathrm{Cov}_{\mathcal{G}}(\Sigma)$ and we show in Theorem \ref{MainThm3} that this category is equivalent to the category of residually tame coverings of a marked curve. 

\begin{rem}
We assume throughout Section \ref{RigidifiedCoverings} that $X$ and $\Sigma$ are connected. Metrized complexes $\Sigma'$ considered in coverings  $\Sigma'\rightarrow{}\Sigma$ are not necessarily connected. The semistable vertex sets in this section are strongly semistable (meaning that the skeleton $\Sigma$ is loopless). 
\end{rem}

\begin{mydef}{\it{(Enhanced tame coverings)}}\label{RigidCoverings1}
An {\it{enhanced tame covering}} of a metrized complex $\Sigma$ associated to a triangulated marked curve $(X,V\cup{D})$ is a pair $\phi_{\mathfrak{g}}:=(\phi,\mathfrak{g})$ consisting of:
\begin{enumerate}
\item A tame covering $\phi:\Sigma'\rightarrow{\Sigma}$ of metrized complexes of $k$-curves,
\item An element $\mathfrak{g}$ of the set of gluing data $\mathcal{G}(\Sigma',X)$, see Section \ref{OverviewSection} and Remark \ref{IdentificationGluingData547}. 
\end{enumerate} 
We will also refer to such a pair as an {\it{enhanced covering}} of $\Sigma$. 
\end{mydef}



Let $\phi_{\mathfrak{g}}=(\phi,\mathfrak{g})$ be an enhanced covering. For every pair of vertices $x'\in{\Sigma'}$ and $\phi(x')=:x\in{\Sigma}$, there is a unique morphism of star-shaped curves $Y(x')\rightarrow{Y(x)}$ associated to it by \cite[Theorem 6.18]{ABBR1}. For every closed edge $e'$ in $\Sigma'$ that $x'$ belongs to, consider the  inclusion $\tau_{x}^{-1}(e'^{o})\subset{Y(x')}$, where $e'^{o}$ is the open edge corresponding to $e'$ and $\tau_{x'}$ is the retraction map for $Y(x')$. If $y'$ is the other endpoint of $e'$, then abstractly we have a $\tau^{-1}(e^{0})$-isomorphism $\tau_{x'}^{-1}(e'^{o})\rightarrow{\tau_{y'}^{-1}(e'^{o})}$. We recall that a set of gluing data is a set of these isomorphisms for all oriented edges $\overrightarrow{e}'={x'y'}$. We now define morphisms of enhanced tame coverings. 


\begin{mydef}{\it{(Morphisms of enhanced coverings)}}\label{MorphismRigidCoverings1}
Let $\phi_{1,\mathfrak{g}_{1}}$ and $\phi_{2,\mathfrak{g}_{2}}$ be two enhanced coverings of a metrized complex $\Sigma$. We write $\phi_{i}:\Sigma_{i}\rightarrow{\Sigma}$ for the morphisms of metrized complexes and $\Theta_{i,\overrightarrow{e}}$ for the isomorphisms arising from the gluing data.  A morphism  $\phi_{1,\mathfrak{g}_{1}}\rightarrow{\phi_{2,\mathfrak{g}_{2}}}$ is a morphism $\psi:\Sigma_{1}\rightarrow{\Sigma_{2}}$ with the following two properties:
\begin{enumerate}
\item The diagram 
\begin{center}
\begin{tikzcd}
\Sigma_{1} \arrow{r}{\psi} \arrow{d}{\phi_{1}} & \Sigma_{2} \arrow{dl}{\phi_{2}} \\
\Sigma & 
\end{tikzcd}
\end{center}
commutes. 
\item The induced diagram 
\begin{center}
\begin{tikzcd}
\tau^{-1}_{x}(e_{1}^{o}) \arrow{r}{\Theta_{1,\overrightarrow{e_{1}}}} \arrow{d}{\psi} & \tau^{-1}_{y}(e_{1}^{o}) \arrow{d}{\psi}\\
\tau^{-1}_{\psi(x)}(e_{2}^{o}) \arrow{r}{\Theta_{2,\overrightarrow{e_{2}}}} \arrow{d}{\phi_{2}} & \tau^{-1}_{\psi(y)}(e_{2}^{o}) \arrow{d}{\phi_{2}}  \\
\tau^{-1}(e^{o}) \arrow{r}{\mathrm{id}} & \tau^{-1}(e^{o}) 
\end{tikzcd}
\end{center}
commutes for every edge $e_{1}\in\Sigma_{1}$ with images $e_{2}\in\Sigma_{2}$ and $e\in\Sigma$. Here we again write $\psi$ and $\phi_{2}$ for the induced unique maps (see \cite[Theorem 6.18]{ABBR1})  on the corresponding star-shaped curves (see \cite[Definition 6.2]{ABBR1}) and the restrictions of these maps to subannuli. 
\end{enumerate}

\end{mydef}

\begin{rem}
The composition of the maps $\psi:\tau^{-1}_{x}(e^{o}_{1})\rightarrow{\tau^{-1}_{\psi(x)}(e_{2}^{o})}$ and $\phi_{2}:\tau^{-1}_{\psi(x)}(e_{2}^{o})\rightarrow{\tau^{-1}(e^{o})}$ is equal to $\phi_{1}:\tau^{-1}_{x}(e^{o}_{1})\rightarrow{\tau^{-1}(e^{o})}$. Indeed, this follows from the commutativity in the first condition of Definition \ref{MorphismRigidCoverings1}, together with the fact that the maps on the star-shaped curves are uniquely induced from the maps of metrized complexes. 
\end{rem}

\begin{rem}
We note that any morphism $\Sigma_{1}\rightarrow{\Sigma_{2}}$ of enhanced tame coverings of $\Sigma$ is automatically tame. Indeed, any subextension of a separable extension is separable (which gives tameness at the vertices) 
and ramification degrees are multiplicative in towers (which gives tameness at the edges).  
\end{rem}

\begin{rem}
We will occasionally denote the commutative diagram of metrized complexes in Definition \ref{MorphismRigidCoverings1} by $\Sigma_{1}\rightarrow{\Sigma_{2}}\rightarrow{\Sigma}$ to ease notation. 
\end{rem}

\begin{mydef}\label{CategoryRigidifiedCoverings}{\it{(Category of enhanced tame coverings)}}
Let $\Sigma$ be a metrized complex. We define the {\it{category $\mathrm{Cov}_{\mathcal{G}}(\Sigma)$ of enhanced tame coverings}} of $\Sigma$ to be the category consisting of:
\begin{enumerate}
\item Enhanced tame coverings of $\Sigma$ as its objects (see Definition \ref{RigidCoverings1}) and
\item Morphisms of enhanced coverings to be the morphisms between the objects (see Definition \ref{MorphismRigidCoverings1}).
\end{enumerate}
We will also call this the {\it{category of enhanced coverings}} of $\Sigma$. 
\end{mydef}

\begin{exa}\label{ExampleRigidifiedCoverings1}
Let us examine \cite[Example 7.8]{ABBR1} from the viewpoint of enhanced coverings. The set-up is as follows. Assume that $\mathrm{char}(k)\neq{2}$. Consider the Tate curve $E$, given by $K^{*}/\langle{q}\rangle$ for some $q\in{K^{*}}$ with $v(q)>0$ and let $\Sigma'\rightarrow{\Sigma}$ be the degree $2$ covering of metrized complexes as in \cite[Example 7.8]{ABBR1}. 
The gluing data $\mathcal{G}(\Sigma',E)$ of the covering $\Sigma'\rightarrow{\Sigma}$ is then easily seen to consist of four elements, corresponding to the choice of an automorphism per edge. The automorphism group $\mathrm{Aut}_{\Sigma}(\Sigma')$ of metrized complexes then also has order four. If we however choose a set of gluing data $\mathfrak{g}$ and consider the corresponding enhanced automorphism group $\mathrm{Aut}_{\Sigma,\mathfrak{g}}(\Sigma')$, then it has order two. 

\end{exa}

Let $\phi:(X',V'\cup{D'})\rightarrow{(X,V\cup{D})}$ be a tame covering of triangulated marked curves, see \cite[Definition 3.8]{ABBR1}. 
The inverse image $\Sigma':=(\phi^{\mathrm{an}})^{-1}(\Sigma)$ is then a skeleton of $(X',D')$ and by \cite[Corollary 4.28]{ABBR1} we obtain a natural tame covering of metrized complexes $\Sigma'\rightarrow{\Sigma}$. In terms of that paper, we say that the morphism $\phi$ is a lifting of $\Sigma'\rightarrow{\Sigma}$. 

We now associate an enhanced tame covering of $\Sigma$ to $\phi$. 
Let $x\in{V}$ and let $Y(x)$ be the canonical star-shaped neighborhood of $x$, as in \cite[Sections 6 and 7]{ABBR1}. We then consider the inverse image of $Y(x)$ in $X'^{\mathrm{an}}$. 
This inverse image is a disjoint union of star-shaped curves $Y(x')$ for $x'\in{X'^{\mathrm{an}}}$ mapping to $x$. We similarly write $y$, $Y(y)$ and $Y(y')$ for a vertex $y$ adjacent to $x$. For any two vertices $x'$ and $y'$ lying above $x$ and $y$ respectively, we consider the intersection $Y(x')\cap{Y(y')}$, which is a disjoint union of open edges: $Y(x')\cap{Y(y')}\simeq{}\coprod{}\tau'^{-1}(e'^{o})$. Here the disjoint union is over all edges $e'$ that contain $x'$ and $y'$ and $\tau'$ is the retraction map on $X'^{\mathrm{an}}$.  
Since the retraction maps $\tau_{x'}$ and $\tau_{y'}$ on the star-shaped curves $Y(x')$ and $Y(y')$ are the restrictions of the retraction map $\tau'$, we then 
have canonical $\tau^{-1}(e^{o})$-isomorphisms $\tau_{x'}^{-1}(e'^{o})\simeq\tau^{-1}(e'^{o})\simeq\tau_{y'}^{-1}(e'^{o})$, which gives a set of gluing data $\mathfrak{g}$ for $\Sigma'\rightarrow{\Sigma}$. We thus have an associated enhanced covering $\phi_{\mathfrak{g}}$ for $\phi$. Furthermore, suppose that we have a morphism of tame coverings of a triangulated marked curve $(X,V\cup{D})$, which is a commutative diagram 
\begin{center}
\begin{tikzcd}
(X_{1},V_{1}\cup{D_{1}}) \arrow{r}{\psi} \arrow{d}{\phi_{1}} & (X_{2},V_{2}\cup{D_{2}}) \arrow{dl}{\phi_{2}} \\
(X,V\cup{D}) & 
\end{tikzcd}.
\end{center}
This in turn comes from a commutative diagram of 
analytifications 
\begin{center}
\begin{tikzcd}
X^{\mathrm{an}}_{1} \arrow{r}{\psi^{\mathrm{an}}} \arrow{d}{\phi^{\mathrm{an}}_{1}} & X^{\mathrm{an}}_{2} \arrow{dl}{\phi^{\mathrm{an}}_{2}} \\
X^{\mathrm{an}} & 
\end{tikzcd}.
\end{center}
Using \cite[Corollary 4.28]{ABBR1}, we see that this induces a commutative diagram of 
metrized complexes
\begin{center}
\begin{tikzcd}
(\phi^{\mathrm{an}}_{1})^{-1}(\Sigma) \arrow{r}{\psi} \arrow{d}{\phi^{\mathrm{an}}_{1}} & (\phi^{\mathrm{an}}_{2})^{-1}(\Sigma) \arrow{dl}{\phi^{\mathrm{an}}_{2}} \\
\Sigma  & 
\end{tikzcd}.
\end{center}
Since the diagram of analytifications is commutative, we directly obtain that 
 the gluing data defined above commutes with $\psi^{\mathrm{an}}$ and the $\phi^{\mathrm{an}}_{i}$ by considering the canonical opens $Y(x_{i})$, $Y({y_{i}})$ and $Y({x_{i}})\cap{Y({y_{i}})}$ in $X^{\mathrm{an}}_{i}$. 
We thus see that we have an induced morphism of enhanced tame coverings of $\Sigma$. In other words, we have a functor $\mathcal{F}_{\mathrm{tri}}$ from the category $\mathrm{Tame}(X,V\cup{D})$ of tame coverings of a triangulated marked curve to the category $\mathrm{Cov}_{\mathcal{G}}(\Sigma)$ of enhanced tame coverings of $\Sigma$.  

 We now compare morphisms in these categories $\mathrm{Tame}(X,V\cup{D})$ and $\mathrm{Cov}_{\mathcal{G}}(\Sigma)$. To ease notation, we adopt the notation $X_{i}:=(X_{i},V_{i}\cup{D}_{i})$ and $X:=(X,V\cup{D})$ for triangulated marked curves in this lemma. 

\begin{lemma}\label{Fullness2}
Let $\phi_{i}:X_{i}\rightarrow{X}$ be tame coverings of a triangulated marked curve $X$ with enhanced tame coverings $\Sigma_{i}\rightarrow{\Sigma}$ arising from the functor $\mathcal{F}_{\mathrm{tri}}$ constructed above. 
Then
\begin{equation}
\mathrm{Hom}_{X}(X_{1},X_{2})
\simeq{\mathrm{Hom}_{\Sigma}(\Sigma_{1},\Sigma_{2})}.
\end{equation}
\end{lemma}
\begin{proof}
We first show the injectivity of the induced map. Suppose that there are two coverings $\psi_{i}$ that map to the same morphism of enhanced coverings.  
For every pair of vertices $x_{1}$ and $x_{2}$ with $\psi_{i}(x_{1})=x_{2}$, we have a unique extension of the algebraic covering $C_{x_{1}}\rightarrow{C_{x_{2}}}$ to a covering of star-shaped curves 
$Y(x_{1})\rightarrow{Y(x_{2})}$ by \cite[Theorem 6.18]{ABBR1}. But these open neighborhoods cover $X^{\mathrm{an}}_{1}$ and $X^{\mathrm{an}}_{2}$,  
so we conclude that $\psi_{1}=\psi_{2}$.

Let $\psi:\Sigma_{1}\rightarrow{\Sigma_{2}}\rightarrow{\Sigma}$ be a morphism of  enhanced tame coverings. We write $Y(x_{i})$ for the star shaped curves corresponding to the vertices $x_{1}$ and $x_{2}$, where $x_{1}$ maps to $x_{2}$. Similarly, we write $Y(y_{i})$ for the star-shaped curves corresponding to adjacent vertices. The commutativity of the diagram in the second condition of Definition \ref{MorphismRigidCoverings1} then implies that the local morphisms $Y(x_{1})\rightarrow{Y(x_{2})}\rightarrow{Y(x)}$ and $Y(y_{1})\rightarrow{Y(y_{2})}\rightarrow{Y(y)}$ lift to a well-defined global morphism on their union. But the union over all vertices is exactly 
$X^{\mathrm{an}}_{i}$, so we conclude that $\psi$ lifts to coverings $X^{\mathrm{an}}_{1}\rightarrow{X^{\mathrm{an}}_{2}}\rightarrow{X^{\mathrm{an}}}$. 

\end{proof}

\begin{cor}\label{EquivalenceTame3}
Let $\mathrm{Tame}(X,V\cup{D})$ be the category of tame coverings of a triangulated marked curve $(X,V\cup{D})$ and let $\mathrm{Cov}_{\mathcal{G}}(\Sigma)$ be the category of enhanced tame coverings. Then $\mathcal{F}_{\mathrm{tri}}$ induces an equivalence of categories.  
\end{cor}
\begin{proof}
By Lemma \ref{Fullness2}, we see that the functor $\mathcal{F}_{\mathrm{tri}}$ is fully faithful. For any enhanced covering $(\Sigma'\rightarrow{\Sigma},\mathfrak{g})$, we glue the local coverings on the star-shaped curves using $\mathfrak{g}$ to obtain a smooth proper analytic space $X'^{\mathrm{an}}$ with a covering $X'^{\mathrm{an}}\rightarrow{X^{\mathrm{an}}}$ as in \cite[Theorem 7.4]{ABBR1}. This comes from an algebraic covering $X'\rightarrow{X}$ and we easily verify that it has the correct properties.  

\end{proof}


We now consider the category $\mathrm{Cov}_{\mathrm{Tame}}(X,D)$ 
of residually tame coverings of the marked curve $(X,D)$. We fix a strongly semistable vertex set $V$ of $(X,D)$ for the remainder of the section. Using Theorem \ref{MainThm2}, we see that the inverse image $V'=(\phi^{\mathrm{an}})^{-1}(V)$ for any 
residually tame covering $\phi:X'\rightarrow{X}$ is a strongly semistable vertex set of $(X',D')$, where $D'$ is the inverse image of $D$. We moreover have the following:
\begin{lemma}\label{GTopToTriangulated}
Let $V$ be a fixed strongly semistable vertex set for $(X,D)$ and let $(X',D')\rightarrow{(X,D)}$ be a residually tame \'{e}tale covering. Let $\phi_{\mathrm{tri}}:(X',V'\cup{D'})\rightarrow{(X,V\cup{D})}$ be the finite morphism of triangulated marked curves induced from Theorem \ref{MainThm2}. Then $\phi_{\mathrm{tri}}$ is a tame covering of triangulated marked curves. This induces a functor from the category of residually tame coverings of $(X,D)$ to the category of tame coverings of $(X,V\cup{D})$.  
\end{lemma}
\begin{proof}
Let $x'$ be a type-$2$ point in $X'^{\mathrm{an}}$ with $\phi^{\mathrm{an}}(x')=x$. 
Then by \cite[Proposition 2.4.7]{Berkovich1993}, the extension of residue fields $\tilde{\mathcal{H}}(x')\supset{\tilde{\mathcal{H}}(x)}$ is separable. As in the proof of Theorem \ref{MainThm2}, the expansion factor of an edge $e$ is just the local degree $[\mathcal{H}(x'):\mathcal{H}(x)]$ for any point in $e$. We then take a point of type $3$ and conclude that the degree is not divisible by $p$. This implies that $\phi$ is a tame covering. We leave the functoriality to the reader.   


\end{proof}

We now consider the composite of the functors in 
Corollary \ref{EquivalenceTame3} 
and Lemma \ref{GTopToTriangulated}. 

\begin{mydef}{\it{(Tropicalization functor)}}
Let $\mathcal{F}_{\Sigma}:\mathrm{Cov}_{\mathrm{Tame}}(X,D)\rightarrow{\mathrm{Cov}_{\mathcal{G}}(\Sigma)}$ be the composite %
of the functors in Corollary \ref{EquivalenceTame3} and Lemma \ref{GTopToTriangulated}.  
We call this the tropicalization functor associated to $\Sigma$. 
\end{mydef}


\begin{reptheorem}{MainThm3}
Let $\mathcal{F}_{\Sigma}$ be the tropicalization functor from the category of residually tame coverings of a marked curve $(X,D)$ to the category of enhanced tame coverings of $\Sigma$. Then $\mathcal{F}_{\Sigma}$ induces an equivalence of categories
\begin{equation}
\mathrm{Cov}_{\mathrm{Tame}}(X,D)\simeq{{\mathrm{Cov}_{\mathcal{G}}(\Sigma)}}.
\end{equation}
\end{reptheorem}
\begin{proof}
Using Lemma \ref{EquivalenceTame3}, we see that we only have to show that the functor from $\mathrm{Cov}_{\mathrm{Tame}}(X,D)$ to $\mathrm{Tame}(X,V\cup{D})$ defines an equivalence. To do this, it suffices to show that any tame covering of triangulated marked curves is also residually tame. By \cite[Proposition 4.35]{ABBR1} any tame covering is an isomorphism outside the skeleton $\Sigma'$. At the vertices of $\Sigma'$, it defines a separable covering, so these give tame extensions. 
For an edge $e'$ in $\Sigma'$, the covering is piecewise linear, with dilation factor coprime to $p$. As in the proof of Theorem \ref{MainThm2}, this dilation factor is the degree $[\mathcal{H}(x'):\mathcal{H}(x)]$ for the points $x'$ in $e'$. This proves that $\phi$ is residually tame.    

\end{proof}

\subsection{An algebraic definition of enhanced coverings}\label{AlgebraicDefinition1}

We now give an algebraic definition of enhanced coverings, without any reference to Berkovich spaces. We note that this section is not essential to the rest of the paper. 

Consider a metrized complex of $k$-curves $\Sigma$. 
For every vertex $x\in{V(\Sigma)}$, the residue curve $C_{x}$ over $k$ is smooth and proper. Every adjacent unoriented edge $e=xy$ then corresponds to a pair of closed points $z_{e,x}$ and $z_{e,y}$ of $C_{x}$ and $C_{y}$ respectively. We will denote the corresponding oriented edges by $\overrightarrow{e}={xy}$ and $\overleftarrow{e}=yx$ as in Remark \ref{RemarkOrientation}. Since $C_{x}$ is smooth we have  
\begin{equation}
\hat{\mathcal{O}}_{C_{x},z_{e,x}}\simeq{k[[u]]}
\end{equation} 
and similarly for $z_{e,y}$ and $C_{y}$. In particular we find that the completed local rings $\hat{\mathcal{O}}_{C_{x},z_{e,x}}$ and $\hat{\mathcal{O}}_{C_{y},z_{e,y}}$ are isomorphic. There is no canonical isomorphism however, so we are led to the following definition. 

\begin{mydef}\label{AlgebraicGluingSets1}
{\it{(Algebraic gluing sets)}}
Let $\overrightarrow{E}_{f}(\Sigma)$ be the set of finite oriented edges of $\Sigma$. An algebraic set of gluing data for $\Sigma$ is a set of isomorphisms 
\begin{equation}
\psi_{\Sigma,\overrightarrow{e}}:\hat{\mathcal{O}}_{C_{x},z_{e,x}} \rightarrow{} \hat{\mathcal{O}}_{C_{y},z_{e,y}}.
\end{equation}
Here $\overrightarrow{e}$ ranges over the set of oriented edges $\overrightarrow{e}=xy$ of $\Sigma$. 
For the same edge but with the opposite orientation, we impose the condition $\psi_{\Sigma,\overleftarrow{e}}=\psi^{-1}_{\Sigma,\overrightarrow{e}}$.  A pair $(\Sigma,\psi_{\Sigma,\overrightarrow{e}})$ is called an {\it{algebraically glued metrized complex}}. If $\Sigma$ is clear from context, then we denote $\psi_{\Sigma,\overrightarrow{e}}$ by $\psi_{\overrightarrow{e}}$.
\end{mydef}

\begin{mydef}\label{AlgEnhancedCoverings547}
{\it{(Algebraically enhanced coverings)}}
Let $(\Sigma',\psi_{\overrightarrow{e}'})$ and $(\Sigma,\psi_{\overrightarrow{e}})$ be two algebraically glued metrized complexes. An enhanced covering $\phi:(\Sigma',\psi_{\overrightarrow{e}'})\to(\Sigma,\psi_{\overrightarrow{e}})$ is a tame covering of metrized complexes $\Sigma'\rightarrow{\Sigma}$ such that for every oriented edge $\overrightarrow{e}'={x'y'}$ mapping to the oriented edge $\overrightarrow{e}={xy}$ the following diagram commutes 
 \begin{center}
\begin{tikzcd}
\hat{\mathcal{O}}_{C_{x'},z_{e',x'}}\arrow{r} & \hat{\mathcal{O}}_{C_{y'},z_{e',y'}} \\
\hat{\mathcal{O}}_{C_{x},z_{e,x}} \arrow{u} \arrow{r} & \hat{\mathcal{O}}_{C_{y},z_{e,y}} \arrow{u} 
\end{tikzcd}.
\end{center}
Here the vertical maps $\hat{\mathcal{O}}_{C_{x},z_{e,x}}\rightarrow{\hat{\mathcal{O}}_{C_{x'},z_{e',x'}}}$ and $\hat{\mathcal{O}}_{C_{y},z_{e,y}}\rightarrow{\hat{\mathcal{O}}_{C_{y'},z_{e',y'}}}$ are induced from the morphism of metrized complexes, and the horizontal maps are induced by $\psi_{\overrightarrow{e}}$ and $\psi_{\overrightarrow{e}'}$. The category of algebraically enhanced coverings of $(\Sigma,\psi_{\overrightarrow{e}})$ has as its objects the algebraically enhanced coverings of $(\Sigma,\psi_{\overrightarrow{e}})$. Morphisms between algebraically enhanced coverings are defined as in Definition \ref{MorphismRigidCoverings1}. For a given tame covering of metrized complexes $\Sigma'\rightarrow{\Sigma}$ and a fixed algebraic gluing set $\psi_{\overrightarrow{e}}$ for $\Sigma$, the set of algebraic gluing data for the pair ($\Sigma'\rightarrow{\Sigma}$, $\psi_{\overrightarrow{e}}$) is the set of all gluing data $\psi_{\overrightarrow{e}'}$ for $\Sigma'$ as in Definition \ref{AlgebraicGluingSets1} that induce an algebraically enhanced covering of $(\Sigma,\psi_{\overrightarrow{e}})$. 
 This set is denoted by $\mathcal{G}_{a}(\Sigma',\psi_{\overrightarrow{e}})$.  
\end{mydef}

\begin{rem}
If the above diagram commutes for a given pair of oriented edges $\overrightarrow{e}'$ and $\overrightarrow{e}$, then the corresponding diagram for $\overleftarrow{e}'$ and $\overleftarrow{e}$ also automatically commutes by the condition on the inverses in Definition \ref{AlgebraicGluingSets1}. 
\end{rem}

We now argue that the set of algebraic gluing data $\mathcal{G}_{a}(\Sigma',\psi_{\overrightarrow{e}})$ for a given morphism $\Sigma'\rightarrow{\Sigma}$ of metrized complexes is finite. In fact, consider the finite set $\prod_{e'\in{E(\Sigma')}}\mathbf{Z}/d_{e'/e}(\phi)\mathbf{Z}$. We can non-canonically identify $\mathcal{G}_{a}(\Sigma',\psi_{\overrightarrow{e}})$ with this set by noting that any two isomorphisms for a given oriented edge $\overrightarrow{e}'$ are related by an automorphism of $\hat{\mathcal{O}}_{C_{x'},z_{e,x'}}$ over $\hat{\mathcal{O}}_{C_{x},z_{e,x}}$. This automorphism group is just $\mathbf{Z}/d_{e'/e}(\phi)\mathbf{Z}$, so we only have to note that these isomorphisms in fact exist. This follows from the fact that there is exactly one tamely ramified extension of $\hat{\mathcal{O}}_{C_{x},z_{e,x}}\simeq{k[[u]]}$ of any given degree coprime to $\mathrm{char}(k)$.

Suppose now that we are given a triangulated marked curve $(X,V\cup{D})$ with metrized complex $\Sigma$. By Remark \ref{IdentificationGluingData547}, the set of analytic gluing data for a tame covering of metrized complexes $\Sigma'\to\Sigma$ can be identified with $\prod_{e'\in{E(\Sigma')}}\mathbf{Z}/d_{e'/e}(\phi)\mathbf{Z}$. By the considerations in the previous paragraph, the same holds for the algebraic gluing data for any fixed 
 gluing set $\psi_{\overrightarrow{e}}$ for $\Sigma$. We thus see that the analytic gluing data can be identified with the algebraic gluing data.  
Moreover, the way that morphisms were defined 
in Definitions \ref{MorphismRigidCoverings1} and 
 \ref{AlgEnhancedCoverings547} directly implies that we obtain an {\it{equivalence of categories}}. The category of enhanced coverings can thus be defined without any reference to Berkovich spaces. 

\begin{exa}\label{GenusTwoCoverings}
We determine all connected degree two enhanced coverings of the metrized complex   in Figure \ref{PlaatjeGenus2}, where $\mathrm{char}(k)\neq{2}$ and the vertex of genus one corresponds to some elliptic curve $\overline{E}/k$. Note that the genus of the complex is $2$, so a degree-$2$ covering has genus $3$ by the Riemann-Hurwitz formula. The dilation factor $d_{e'/e}(\phi)$ over the edges is either $1$ or $2$. If one of them is $2$, then there is graph-theoretically only one option, see Figure \ref{PlaatjeGenus2}. On the vertex of genus $1$, this induces an \'{e}tale degree two covering of $\overline{E}\backslash{\{p_{1},p_{2}\}}$. Using Grothendieck's results on the tame fundamental group of a curve over a field  {\normalfont{\cite[Corollaire 2.12]{SGA1}}}, one then sees that $4$ of these coverings $C\rightarrow{\overline{E}\backslash{\{p_{1},p_{2}\}}}$ are ramified over at least one $p_{i}$.
Each of these maps is ramified over the edges and the corresponding gluing data can be identified with 
$\mathbf{Z}/2\mathbf{Z}\times\mathbf{Z}/2\mathbf{Z}$. These give rise to two distinct isomorphism classes of enhanced coverings, as in the case of an elliptic curve with multiplicative reduction. 
For each of the four coverings above we thus have two liftings, giving $8$ coverings in total that are ramified over at least one edge. 


\begin{figure}[h]
\centering
\includegraphics[width=9cm]{{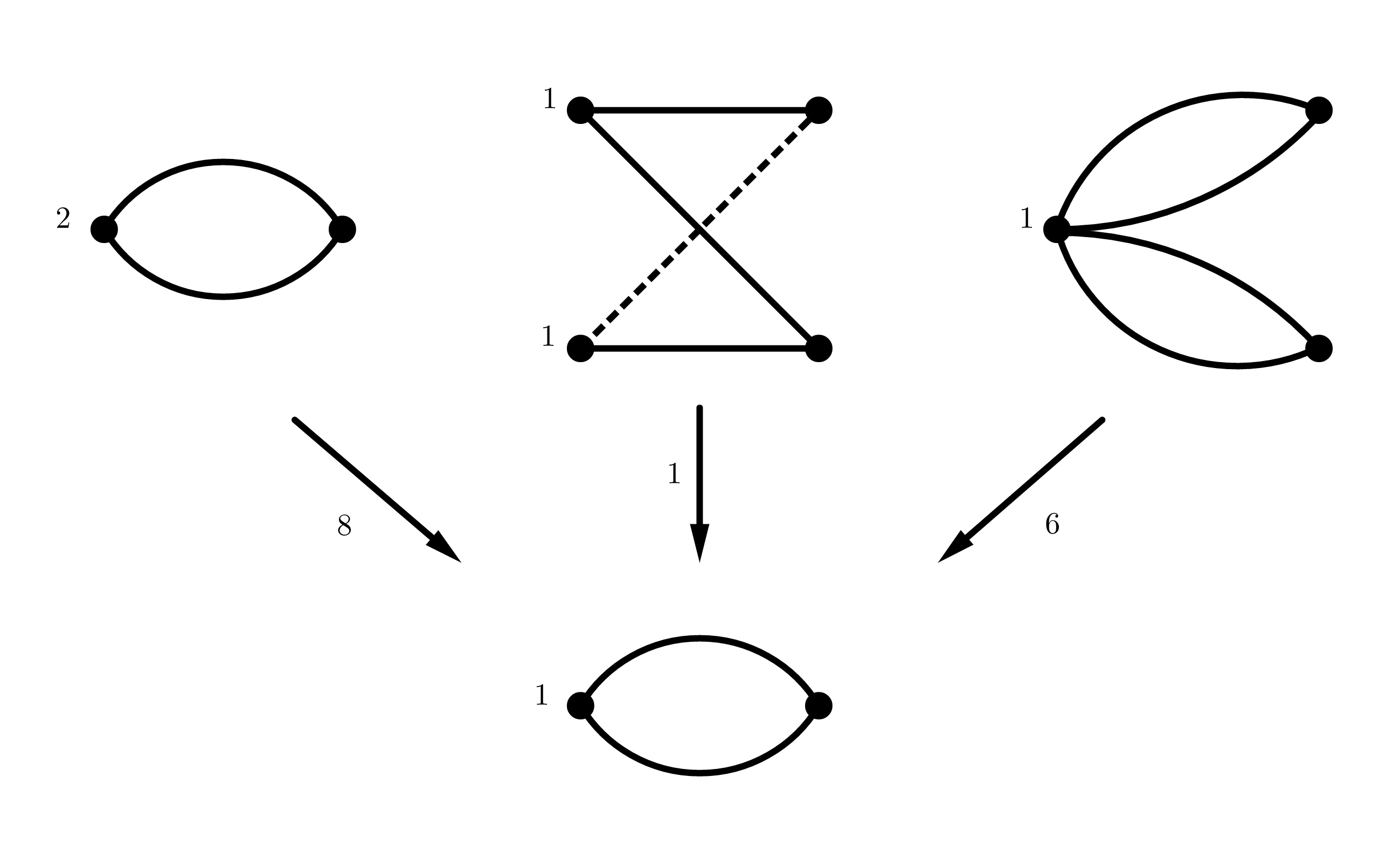}}
\caption{\label{PlaatjeGenus2} The enhanced connected coverings of degree two of a metrized complex of genus $2$. 
The integers next to the arrows represent the number of enhanced coverings corresponding to the particular graph-theoretical type. The integers next to the vertices give the genus of the vertex.  }
\end{figure}

If the dilation factor is $1$ on both edges, then there are graph-theoretically two options for a degree two covering. It is either split over the vertex of genus one or it is not. If it is not split, then it is defined by a non-trivial \'{e}tale covering of degree two of $\overline{E}$ and there are exactly $3$ such coverings. We still have some freedom in identifying the closed points corresponding to the edges however. 
This gives a total of four options per covering of $\overline{E}$. Using the non-trivial automorphism of $\overline{E}'$ over $\overline{E}$, we find that out of the four options, only two are non-isomorphic. In total, we thus obtain $6$ of these coverings. Lastly, suppose that the covering is completely split over both the edges and the vertices. There is then only one option, namely the graph-theoretical covering of degree two of the circle. In total, we find $15$ different non-trivial coverings of degree two. 


\end{exa}

\section{Fundamental groups for metrized complexes}
\label{TropicalFundamental}

In this section, we use the equivalence of categories in Theorem \ref{MainThm3} and endow the category of residually tame coverings with the structure of a Galois category, giving a natural notion of a fundamental group $\pi(\Sigma)$ for a metrized complex $\Sigma$. 
We furthermore define the notions of unramified coverings and completely split coverings above a subcomplex $\Sigma^{0}\subset{\Sigma}$. These correspond to closed subgroups of $\pi(\Sigma)$, which we call the absolute inertia group $\mathfrak{I}(\Sigma^{0})$ and decomposition group $\mathfrak{D}(\Sigma^{0})$ of $\Sigma^{0}$. 
For unmarked curves, we prove in Theorem \ref{FundamentalGroupGraph1} that the quotient $\pi_{\mathfrak{D}}(\Sigma(X)):=\pi(\Sigma)/\mathfrak{D}(\Sigma)$  is isomorphic to the profinite completion of the ordinary fundamental group of the underlying graph of $\Sigma$. Furthermore, we show that the coverings that come from the abelianizations of the quotients $\pi_{\mathfrak{D}}(\Sigma)$ and $\pi_{\mathfrak{I}}(\Sigma)$ correspond to the extensions that come from the toric and connected parts in the analytic Jacobian of the curve, see Theorem \ref{TorsionUnramifiedCoverings1}. 

\subsection{Fundamental, inertia and decomposition groups}\label{FundGroupSection}

As in Section \ref{RigidifiedCoverings}, we fix a semistable vertex $V$ for a marked curve $(X,D)$ with skeleton $\Sigma:=\Sigma(X,V\cup{}D)$. Here we again assume that $X$ is connected.  
By Theorem \ref{MainThm3}, the tropicalization functor $\mathcal{F}_{\Sigma}$ gives an equivalence of categories 
\begin{equation}
\mathcal{F}_{\Sigma}:\mathrm{Cov}_{\mathrm{Tame}}(X,D)\rightarrow{{\mathrm{Cov}_{\mathcal{G}}(\Sigma)}}.
\end{equation}

Since $\mathrm{Cov}_{\mathrm{Tame}}(X,D)$ is a Galois category, we obtain an induced Galois category structure on ${\mathrm{Cov}_{\mathcal{G}}(\Sigma)}$. 
\begin{mydef}\label{GaloisCategoryStructure1}
{\it{[Fundamental group of a metrized complex]}} Let ${\mathrm{Cov}_{\mathcal{G}}(\Sigma)}$ be the category of enhanced tame coverings of $\Sigma$. We endow it with the Galois category structure induced by Theorem \ref{MainThm3}. 
The corresponding fundamental group is denoted by $\pi(\Sigma)=\pi_{\mathrm{Tame}}(X,D)$. 
We call this group the (tame) profinite tropical fundamental group of $\Sigma$, or just fundamental group of $\Sigma$.  
\end{mydef}


Let us state some facts about the interaction between the profinite group $\pi(\Sigma)$ and the category $\mathrm{Cov}_{\mathcal{G}}(\Sigma)$. All of these results directly follow from the theory of Galois categories. 

\begin{prop}\label{Facts1}
Let $\Sigma$ be a fixed metrized complex associated to the marked curve $(X,D)$ and consider the category of enhanced tame coverings of $\Sigma$.  Then the following are true.
\begin{enumerate}
\item There is an equivalence of categories
\begin{equation}
{\mathrm{Cov}_{\mathcal{G}}(\Sigma)}\rightarrow{(\mathrm{Finite}\,\pi(\Sigma)\mhyphen\mathrm{Sets})}.
\end{equation}
\item The closed subgroups of finite index in $\pi(\Sigma)$ correspond bijectively to connected enhanced tame coverings $\Sigma'\rightarrow{\Sigma}$.
\end{enumerate}
\end{prop}

\begin{exa}\label{EllipticCurves}
Consider an elliptic curve $E/K$ with $\mathrm{char}(K)=0$. The ordinary profinite fundamental group of $E$ is $\hat{\mathbf{Z}}^{2}$. For $\mathrm{char}(k)=0$, we have that every such covering is automatically residually tame, so $\pi(\Sigma)\simeq{\hat{\mathbf{Z}}^{2}}$. If $\mathrm{char}(k)=p$, then the group depends on the reduction type of $E$. If $E$ has good reduction, then the minimal skeleton consists of a single vertex of genus $1$ and we write $\Sigma$ for this skeleton. We then have $\pi(\Sigma)\simeq{\pi(\overline{E})}$, where $\pi(\overline{E})$ is the \'{e}tale fundamental group of $\overline{E}$. Suppose that the reduced curve $\overline{E}$ has supersingular reduction. 
We then have $\pi(\Sigma)\simeq{\hat{\mathbf{Z}}'^{2}}$, where the prime means that the inverse limit in the definition of $\hat{\mathbf{Z}}$ runs over all subgroups of index coprime to $p$. The \'{e}tale coverings of degree $p^{n}$ are thus not residually tame. 
If the reduced curve $\overline{E}$ has ordinary reduction, then there are coverings of degree $p^{n}$ that are residually tame, see Example \ref{SeparableExample}. We have $\pi(\Sigma)\simeq{\hat{\mathbf{Z}}'^{2}}\times{\mathbf{Z}_{p}}$.     
Suppose now that $\Sigma$ has Betti number one and let $E/K$ be an elliptic curve with skeleton $\Sigma$. Then $\pi(\Sigma)\simeq{\hat{\mathbf{Z}}'^{2}}\times{\mathbf{Z}_{p}}$. Indeed, using the Tate uniformization 
$E^{\mathrm{an}}\simeq{\mathbf{G}^{\mathrm{an}}_{m,K}/\langle{q}\rangle}$, one easily obtains a description of all finite \'{e}tale coverings. The induced coverings on the skeleton are either topological coverings or coverings with some expansion factor $d$ on the cycle. The topological coverings are residually tame (see Proposition \ref{CategoryToric}), but the coverings with dilation factor $p^{n}$ on both edges are not. 
\end{exa}

\begin{rem}{\it{(Function fields and algebraic coverings)}}\label{FunctionFields} 
Every covering $X'\rightarrow{X}$ of connected smooth curves comes with a natural extension of function fields $K(X)\rightarrow{K(X')}$ and this is a bijective correspondence. The closed subgroups of $\pi_{\mathrm{Tame}}(X,D)=\pi(\Sigma)$ then correspond to (possibly infinite) field extensions, which are composites of finite function field extensions $K(X')\supset{K(X)}$ where $X'\rightarrow{X}$ is residually tame.     
We will create closed subgroups of $\pi(\Sigma)$ using this correspondence.   


\end{rem}

\begin{mydef}{\it{(Subcomplexes)}}
A (strict) finite subcomplex $\Sigma^{0}$ of $\Sigma$ consists of subsets $V(\Sigma^{0})\subset{V_{f}(\Sigma)}$ and $E(\Sigma^{0})\subset{E_{f}(\Sigma)}$. Here $V_{f}(\Sigma)$ and $E_{f}(\Sigma)$ denote the sets of finite vertices and edges of $\Sigma$ respectively. 
\end{mydef}

\begin{mydef}\label{UnramifiedTotallySplit}
{\it{(Unramified and completely split coverings)}}
Let $\phi:\Sigma'\rightarrow{\Sigma}$ be a connected enhanced covering of metrized complexes and let $\Sigma^{0}\subseteq{\Sigma}$ be a subcomplex. We say that
\begin{enumerate}
\item $\phi$ is {\it{metrically unramified}} (or {\it{unramified}}) above $\Sigma^{0}$ if for every edge $e'\in{E(\Sigma')}$ mapping to $e\in{E(\Sigma^{0})}$, we have $d_{e'/e}(\phi)=1$,
\item $\phi$ is {\it{completely split}} above $\Sigma^{0}$ if $\phi$ is unramified above $\Sigma^{0}$ and for every vertex $v\in{V(\Sigma^{0})}$, we have that there are $\mathrm{deg}(\phi)$ vertices $v'\in{V(\Sigma')}$ such that $\phi(v')=v$. 
\end{enumerate}
We then say that an enhanced covering of metrized complexes has any of the above properties if the connected components have these properties. The enhanced covering $\Sigma'\rightarrow{\Sigma}$ is unramified (resp. completely split) if it is unramified (resp. completely split) over the maximal finite subcomplex $\Sigma^{0}$ in $\Sigma$. We say that a residually tame covering $\phi:(X',D')\rightarrow{(X,D)}$ is unramified (resp. completely split) above a subcomplex $\Sigma^{0}\subseteq{\Sigma}$ if $\mathcal{F}_{\Sigma}(\phi)$ is unramified (resp. completely split) above $\Sigma^{0}$ (and similarly for the maximal finite subcomplex). 
\end{mydef}

\begin{rem}
Our definition of $\phi$ being completely split above an edge or vertex corresponds to $\phi$ being a topological covering above that edge or vertex. The definition of being unramified is not related to the Berkovich definition of being unramified or \'{e}tale. Indeed, any morphism $X'^{\mathrm{an}}\rightarrow{X^{\mathrm{an}}}$ will automatically be \'{e}tale outside the branch points. 
However, the morphism $\mathcal{X}'\rightarrow{\mathcal{X}}$ of strongly semistable models corresponding to $\phi:\Sigma'\rightarrow\Sigma$ is not \'{e}tale 
at the closed points corresponding to edges if and only if $d_{e'/e}(\phi)\neq{1}$. This can be seen by considering the corresponding morphism of completed rings for the ordinary double points. It is for this reason that we call these morphisms unramified at an edge if $d_{e'/e}(\phi)=1$.   

\end{rem}

\begin{lemma}\label{UnrSplitGaloisComposite}
The notions of being unramified and completely split above a subcomplex $\Sigma^{0}$ are stable under  taking composites. 
\end{lemma}
\begin{proof}
We first show that these are stable under taking Galois closures. For the notion of being unramified, we can take a type-$3$ point $x$ in an edge of $\Sigma^{0}$. If $[\mathcal{H}(x'):\mathcal{H}(x)]=1$ for every point $x'$ lying above $x$, then by Proposition \ref{GaloisClosureCriterion} we conclude that $[\mathcal{H}(\overline{x}):\mathcal{H}(x)]=1$. We can also apply the above reasoning to a point of type $2$ that is completely split to obtain the desired statement for completely split morphisms. For the composite $K(\overline{X})$ of two fields $K(X_{i})$, we argue as in \ref{CompositeTame}: we reduce to the case of two Galois extensions and then find that the composite of $\mathcal{H}(x_{1})$ and $\mathcal{H}(x_{2})$ for two points $x_{i}\in{X^{\mathrm{an}}_{i}}$ lying above $x$ is $\mathcal{H}(\overline{x})$, where $\overline{x}\in{\overline{X}^{\mathrm{an}}}$ lies above the $x_{i}$. If $[\mathcal{H}(x_{i}):\mathcal{H}(x)]=1$ for both $i$, then $[\mathcal{H}(\overline{x}):\mathcal{H}(x)]=1$, which quickly gives the desired statement for both unramified and completely split morphisms. 
\end{proof}

We now consider the closed subgroups in $\pi(\Sigma)$ corresponding to the coverings that are unramified and completely split. 

\begin{mydef}
We define $\mathfrak{I}(\Sigma^{0})$ and $\mathfrak{D}(\Sigma^{0})$ 
to be the closed subgroups of $\pi(\Sigma)$ corresponding to the coverings that are unramified and completely split above $\Sigma^{0}$ respectively. We refer to them as the (absolute) inertia and decomposition groups of $\Sigma^{0}$ respectively. If $\Sigma^{0}$ consists of all finite edges and vertices of $\Sigma$, then we denote these groups by $\mathfrak{I}(\Sigma)$ and $\mathfrak{D}(\Sigma)$.  
\end{mydef}


\begin{prop}\label{Normality1}
The subgroups $\mathfrak{I}(\Sigma^{0})$ and $\mathfrak{D}(\Sigma^{0})$ are normal subgroups of $\pi(\Sigma)$. 
\end{prop}
\begin{proof}
Let $x\in{M}$, where $M$ is the field corresponding to either $\mathfrak{I}(\Sigma^{0})$ or $\mathfrak{D}(\Sigma^{0})$ and write $L=K(X)(x)$ for the field generated by $x$ over $K(X)$. By Lemma, \ref{UnrSplitGaloisComposite}, the covering $X'\rightarrow{X}$ corresponding to $L$ is unramified or completely split above $\Sigma^{0}$, and the Galois closure then also has the same property. Thus the conjugates of $x$ are contained in $M$, which proves that it is Galois. 

\end{proof}

\begin{mydef}\label{ConnectedToricParts1}{\it{(Decomposition and inertia quotients of a subcomplex)}}
Let $\Sigma^{0}\subseteq{\Sigma}$ be a subcomplex and let 
$\mathfrak{I}(\Sigma^{0})$ and $\mathfrak{D}(\Sigma^{0})$ be the absolute inertia and decomposition groups of $\Sigma^{0}$. 
We define $\pi_{\mathfrak{I}}(\Sigma^{0}):=\pi(\Sigma)/\mathfrak{I}(\Sigma^{0})$ and $\pi_{\mathfrak{D}}(\Sigma^{0}):=\pi(\Sigma)/\mathfrak{D}(\Sigma^{0})$. If $\Sigma^{0}$ consists of all finite vertices and edges of $\Sigma$, then we write $\pi_{\mathfrak{I}}(\Sigma)$ and $\pi_{\mathfrak{D}}(\Sigma)$ for these groups.   

\end{mydef}

\begin{rem}
We will see in Section \ref{Abelianization1} that there is a natural connection between the cyclic abelian extensions coming from $\pi_{\mathfrak{D}}(\Sigma)$ and $\pi_{\mathfrak{I}}(\Sigma)$ and the cyclic  abelian extensions coming from the toric and connected parts of the analytic Jacobian of $X$. It is in this sense that we think of the groups $\pi_{\mathfrak{D}}(\Sigma)$ and $\pi_{\mathfrak{I}}(\Sigma)$ as natural {\it{non-abelian}} generalizations of the extensions coming from the toric and connected parts of the Jacobian in the tame case. 
\end{rem}
We now connect the group $\pi_{\mathfrak{D}}(\Sigma)$ to the profinite completion of the ordinary fundamental group of the graph underlying $\Sigma$. 
Let $\Gamma$ be the finite connected graph underlying $\Sigma$.  We then denote the category of finite coverings by $\mathrm{Cov}(\Gamma)$,  its profinite fundamental group by $\hat{\pi}(\Gamma)$ and its ordinary fundamental group by $\pi(\Gamma)$ (this is the only time we will use this notation for a nonprofinite group). 
The normal subgroup $\mathfrak{D}(\Sigma)$ gives rise to a Galois subcategory of $\mathrm{Cov}(\Sigma)$, which we denote by $\mathrm{Cov}_{\mathfrak{D}}(\Sigma)$. The corresponding profinite fundamental group is $\pi_{\mathfrak{D}}(\Sigma)$. 
We now have the following
\begin{prop}\label{CategoryToric}
Let $\mathrm{Cov}_{\mathfrak{D}}(X)$ be as above and consider the 
 the forgetful functor $\mathrm{Cov}_{\mathfrak{D}}(\Sigma)\rightarrow{\mathrm{Cov}(\Gamma)}$. This induces an equivalence of categories
\begin{equation}
\mathrm{Cov}_{\mathfrak{D}}(\Sigma)\rightarrow{\mathrm{Cov}(\Gamma)}.
\end{equation} 
\end{prop}
\begin{proof}
Let $\Gamma'\rightarrow{\Gamma}$ be a finite covering of finite graphs. By assigning the same length function on $\Gamma'$ as on $\Gamma$ (induced by $\Sigma$) and by assigning to every vertex of $\Gamma'$ a projective line with the right identifications of the edges, we easily obtain a finite tame covering of metrized complexes $\Sigma'\rightarrow{\Sigma}$. There is no non-trivial gluing data (indeed, $\mathrm{Aut}_{\tau^{-1}_{x}(e^{0})}(\tau_{x'}^{-1}(e'^{0}))=(1)$ for every finite vertex $x'$ with image $x$ and adjacent edges $e'$ and $e$), so this also gives a canonical enhanced morphism. This shows that the functor is essentially surjective. For any two completely split coverings $\Sigma_{i}\rightarrow{\Sigma}$, giving an enhanced $\Sigma$-covering $\Sigma_{1}\rightarrow{\Sigma_{2}}$ of metrized complexes is no different from giving a covering of graphs (since there are no non-trivial coverings of algebraic curves and no non-trivial dilation factors). This gives an isomorphism of $\mathrm{Hom}$-sets and we conclude that the categories are equivalent.  

\end{proof}

\begin{reptheorem}{FundamentalGroupGraph1}
Let $\mathfrak{D}(\Sigma)$ be the decomposition group of $\Sigma$ in $\pi(\Sigma)$. Then $\pi_{\mathfrak{D}}(\Sigma):=\pi(\Sigma)/\mathfrak{D}(\Sigma)$ is isomorphic to the profinite completion of the ordinary fundamental group of the underlying graph $\Gamma$ of a metrized complex $\Sigma$ corresponding to $X$.
\end{reptheorem}
\begin{proof}
 First, the category $\mathrm{Cov}(\Gamma)$ is a Galois category with profinite fundamental group equal to $\hat{\pi}(\Gamma)$. Indeed by \cite[Chapitre IX, num\'{e}ro 6; 19, Chapter V]{Godbillon1} or a modification of \cite[Theorem 1.38]{Hatcher1}, we find that the category of $\pi(\Gamma)$-sets is equivalent to the category of coverings of $\Gamma$ and thus the profinite completion $\hat{\pi}(\Gamma)$ is equivalent to the category of finite coverings of $\Gamma$. By Proposition \ref{CategoryToric}, this last category is equivalent to the category of finite $\pi_{\mathfrak{D}}(\Sigma)$-sets, which immediately yields the desired isomorphism by \cite[\href{http://stacks.math.columbia.edu/tag/0BN5}{Tag 0BN5}]{stacks-project}.
\end{proof}

\subsection{Filtrations of the abelianization of $\pi(\Sigma)$}\label{Abelianization1}

In this section, we study the abelianizations of the profinite groups $\pi(\Sigma)$, $\pi_{\mathfrak{I}}(\Sigma)$ and $\pi_{\mathfrak{D}}(\Sigma)$ introduced in Section \ref{FundGroupSection}. 
We start by reviewing some material on divisors on metric graphs and skeleta of Jacobians. For more details, we refer the reader to \cite{Baker2014}.

Let $X$ be a connected curve over $K$ as in Section \ref{AlgebraicAnalyticPrelim} and let $J:=J(X)$ be its Jacobian. 
We write $\mathcal{X}$ 
for a fixed strongly semistable model of $X$ with skeleton $\Sigma$ and retraction map $\tau:X^{\mathrm{an}}\rightarrow{\Sigma}$.  
By linearly extending $\tau$, we then obtain a map 
\begin{equation}
\tau_{*}:\mathrm{Div}(X)\rightarrow{}
{\mathrm{Div}_{\Lambda}(\Sigma)}.
\end{equation}
On divisors of degree zero, this leads to the following commutative diagram with exact rows: 
\begin{center}
\begin{tikzcd}
{0} \arrow[r] & {\mathrm{Prin}(X)} \arrow[r] \arrow[d] & {\mathrm{Div}^{0}(X)} \arrow[r] \arrow[d] & {J(K)} \arrow[r] \arrow[d] & {0} \\
{0} \arrow[r] & {\mathrm{Prin}_{\Lambda}(\Sigma)} \arrow[r]           & {\mathrm{Div}^{0}_{\Lambda}(\Sigma)} \arrow[r]           & {\mathrm{Jac}_{\Lambda}(\Sigma)} \arrow[r]           & {0}
\end{tikzcd}
\end{center}
Here $\mathrm{Jac}_{\Lambda}(\Sigma)$ is the $\Lambda$-valued tropical Jacobian of $\Sigma$. We will view this as a subset of the real tropical Jacobian $\mathrm{Jac}(\Sigma)=\mathrm{Jac}_{\mathbf{R}}(\Sigma)$.     
By the results in \cite{Baker2014}, this real tropical Jacobian arises naturally as the skeleton of $J^{\mathrm{an}}$. Furthermore, under this identification the retraction map 
\begin{equation}
\overline{\tau}:J^{\mathrm{an}}\rightarrow{\mathrm{Jac}(\Sigma)}
\end{equation}
 is given on $K$-valued points by the earlier maps on divisor groups. 
We now consider the kernel of $\overline{\tau}$, which is the unique compact analytic domain $J^{0}\subset{J^{\mathrm{an}}}$ that is also a formal $K$-analytic subgroup, see \cite[Corollary 6.8]{Baker2014}. 
We then have an exact sequence 
\begin{equation}
(1)\rightarrow{T^{0}}\rightarrow{J^{0}}\rightarrow{B^{\mathrm{an}}}\rightarrow{(1)}
\end{equation}
where $T^{0}\subset{T^{\mathrm{an}}}$ is an affinoid torus 
and $B$ is an abelian variety with good reduction. Note that if $T^{\mathrm{an}}=(\mathbf{G}^{\mathrm{an}}_{m,K})^{n}$, then the canonical reduction $\overline{T}$ of $T^{0}$ is isomorphic to $(\mathbf{G}_{m,k})^{n}$.
 As in \cite[Theorem 5.1.c]{Bosch1984} and \cite[Section 7.2]{Baker2014}, the reduction of $J^{0}$ is equal to the Jacobian of the special fiber $\mathcal{X}_{s}$, which gives the exact sequence 
\begin{equation}\label{ExactSequenceTorsion}
(1)\rightarrow{\overline{T}}\rightarrow{\overline{J^{0}}}\rightarrow{\prod_{i=1}^{n}\mathrm{Jac}(\Gamma_{i})=\overline{B}}\rightarrow{(1)}.
\end{equation}
Here the $\Gamma_{i}$ are the irreducible components of $\mathcal{X}_{s}$ and $\overline{T}\simeq(k^{*})^{t}$. 
This toric rank $t$ is equal to the first Betti number of the intersection graph of $\mathcal{X}_{s}$ by \cite[Chapter 7, Lemma 5.18]{liu2}. We write $a=\sum_{i=1}^{n} g(C_{i})$ for the {{abelian rank}} of $J^{0}$ and $\pi$ for the map $\overline{J}^{0}\rightarrow{{\prod_{i=1}^{n}\mathrm{Jac}(\Gamma_{i})}}=\overline{B}$ in Equation \ref{ExactSequenceTorsion}. 

The group scheme $J[n]$ is \'{e}tale over $K$ by our assumptions on $n$, so we can identify it with its $K$-rational points. These $K$-rational points give a set of type-$1$ points of $J^{\mathrm{an}}$ and we again denote these by $J[n]$. We then define 
\begin{eqnarray*}
J^{0}[n]&=&\{P\in{J[n]}:\overline{\tau}(P)=0\},\\
T[n]&=&\{P\in{J^{0}[n]}:\pi(\overline{P})=0\}.
\end{eqnarray*}
Here $\overline{P}$ is the image of $P$ in $\overline{J}^{0}$ under the reduction map. 



\begin{prop}\label{TorsionProposition1}
Let $n$ be any integer that is coprime to the residue characteristic of $K$. Then
\begin{align}
J^{0}[n]& \simeq{}{(\mathbf{Z}/n\mathbf{Z})^{t+2a}},\\
T[n]&\simeq{}{(\mathbf{Z}/n\mathbf{Z})^{t}}.
\end{align}
\end{prop}
\begin{proof}
This follows from \cite[Chapter 7, Corollary 4.41]{liu2}, the exact sequence in Equation (\ref{ExactSequenceTorsion}) and the fact that the reduction map restricted to the $n$-torsion has no kernel for $n$ coprime to $\mathrm{char}(k)$. 
\end{proof}

We can now characterize the torsion points in $J$ using the groups $\pi_{\mathfrak{D}}(\Sigma)$ and $\pi_{\mathfrak{I}}(\Sigma)$. We assume throughout that $n$ is coprime to the residue characteristic. We first note that we have a canonical isomorphism 
\begin{equation}
J[n]\simeq{}{\mathrm{Hom}(\pi(\Sigma),\mathbf{Z}/n\mathbf{Z})}
\end{equation}
by \cite[Chapter III, Lemma 9.2]{Milne1}. That is, there is a natural bijection between cyclic \'{e}tale coverings of $X$ and torsion points of $J$. To be more explicit, let $D\in{J}[n]$ and suppose that $D$ has order $n$. Then $nD=\mathrm{div}(f)$ for some $f\in{K(X)}$ and we consider the covering on the level of function fields defined by
\begin{equation}
z^{n}=f.
\end{equation}
The condition on the order of $D$ ensures that this equation is irreducible and by local considerations, this covering is \'{e}tale. Conversely, every cyclic covering of $X$ is given on the level of function fields by a covering of the form $z^{n}=f$ by Kummer theory. In order for this covering to be \'{e}tale, the valuation of $f$ at every closed point must be divisible by $n$. This then quickly gives an $n$-torsion point in $J$. 

\begin{reptheorem}{TorsionUnramifiedCoverings1}
Let $\mathfrak{I}(\Sigma)$ and $\mathfrak{D}(\Sigma)$ be the inertia and decomposition group of $\Sigma$ in $\pi(\Sigma)$ and let $\pi_{\mathfrak{I}}(\Sigma)$ and $\pi_{\mathfrak{D}}(\Sigma)$ be their corresponding quotients in $\pi(\Sigma)$. Let $n$ be an integer such that $\mathrm{gcd}(n,\mathrm{char}(k))=1$. Then the isomorphism ${J[n]}\simeq{\mathrm{Hom}(\pi(\Sigma),\mathbf{Z}/n\mathbf{Z})}$ induces isomorphisms 
\begin{equation}
J^{0}[n]\simeq{}{\mathrm{Hom}(\pi_{\mathfrak{I}}(\Sigma),\mathbf{Z}/n\mathbf{Z})}
\end{equation}
and
\begin{equation}
T[n]\simeq{}{\mathrm{Hom}(\pi_{\mathfrak{D}}(\Sigma),\mathbf{Z}/n\mathbf{Z})}.
\end{equation} 

\end{reptheorem}
\begin{proof}

Let $\mathcal{X}$ be a strongly semistable model for $X$. On the level of function fields, any cyclic \'{e}tale covering $X'\to{X}$ is given by $K(X')=K(X)(\alpha)$, where $\alpha^{n}=f$ and $f\in{K(X)}$. 
This defines a Galois-topologically tame covering of $X$ and we denote the induced morphism of metrized complexes by $\psi:\Sigma'\rightarrow{\Sigma}$ and the morphism of semistable models by $\mathcal{X}'\rightarrow{\mathcal{X}}$. For any vertex $v$ of $\Sigma(\mathcal{X})$, we denote the corresponding generic point of the special fiber $\mathcal{X}_{s}$ by $\eta_{v}$. The local ring $\mathcal{O}_{\mathcal{X},\eta_{v}}$ is a valuation ring of rank $1$, being locally generated by $\mathfrak{m}_{R}\subset{R}$. We can directly describe the normalization of $\mathcal{X}$ in $K(X')$ 
above this valuation ring as follows. Write $f=\omega^{n}f_{v}$ for some $f_{v}$ with $v_{\eta_{v}}(f_{v})=0$. The element $\alpha'=\alpha/\omega$ is then integral over $\mathcal{O}_{\mathcal{X},\eta}$ as it satisfies $\alpha'^{n}=f_{v}$. Since $\mathrm{gcd}(n,\mathrm{char}(k))=1$, we find that this gives an \'{e}tale extension which thus 
describes the normalization above $\eta_{v}$. That is, the points of $\mathcal{X}'$ that lie above $\eta_{v}$ are described by the \'{e}tale $\mathcal{O}_{\mathcal{X},\eta_{v}}$-scheme 
\begin{equation}
Z=\mathrm{Spec}(\mathcal{O}_{\mathcal{X},\eta_{v}}[z]/(z^{n}-f_{v}))
\end{equation} 
or equivalently by the base change of $Z$ over $\mathrm{Spec}(k(\eta_{v}))$. The closed points of this scheme are as follows. Write $\overline{f}_{v}=f_{1}^{d}$ in $k(\eta_{v})$, where $d$ is the largest divisor of $n$ such that $\overline{f}_{v}$ is a $d$-th power. Let $\zeta$ be a primitive $d$-th root of unity. The factors $z^{n/d}-\zeta^{i}f_{1}$ of $z^{n}-\overline{f}_{v}$ are then irreducible over $k(\eta_{v})$ for every $i\in{0,1,...,d-1}$ by our assumption on $d$. 
We thus find that the polynomials $z^{n/d}-\zeta^{i}f_{1}$ define the extensions of residue fields $k(\eta_{v})\subset{k(\eta_{v_{i}})}$. These are all isomorphic over $k(\eta_{v})$, so it suffices to consider the one determined by $z^{n/d}-f_{1}$. 


We now start with the correspondence for ${J^{0}}$. Let $D\in{J^{0}[n]}$. 
Then $\overline{\tau}(D)=0$, so there is a piecewise linear function $\phi_{D}$ such that $\tau_{*}(D)=\Delta(\phi_{D})$. Here $\Delta(\cdot{})$ is the Laplace operator on piecewise linear functions.  
Let $f\in{K(C)}$ be such that $\mathrm{div}(f)=nD$ and let $\phi_{f}$ be a piecewise linear function on $\Sigma$ such that $\Delta(\phi_{f})=\tau_{*}(\mathrm{div}(f))$. Since the divisors of $n\cdot{\phi_{D}}$ and $\phi_{f}$ are the same, we can use \cite[Theorem 3]{BakerFaber2} and translate these functions so that 
$n\cdot{}\phi_{D}=\phi_{f}$. The slope of $\phi_{f}$ on every edge is then divisible by $n$. 
Let $v$ be a vertex in $\Sigma$ and scale $f$ as in the previous paragraph so that $v_{\eta_{v}}(f_{v})=0$. By the {\it{slope formula}} (see \cite[Theorem 5.15(3)]{BPRa1}) we find that the order of the reduction of $f_{v}$ at the closed point $w_{e}$ in $C_{v}$ corresponding to an edge $e$ is divisible by $n$. Write $\overline{f}_{v}=f_{1}^{d}$ as in the first paragraph and consider a factor $z^{n/d}-f_{1}$ defining the extension of residue curves. 
Since the order of $\overline{f}_{v}$ at the closed point $w_{e}$ is divisible by $n$, we find that the order of $f_{1}$ at $w_{e}$ is divisible by $n/d$. A computation similar to the one in the previous paragraph then shows that the morphisms $C_{v_{i}}\rightarrow{C_{v}}$ are completely split above $w_{e}$ (divide or multiply by a suitable power of a uniformizer at $w_{e}$). We then apply \cite[Theorem 4.23]{ABBR1} and see that $d_{e'}(\psi)=1$ for every edge $e'$ lying above $e$, which shows that $\psi$ is unramified above $e$.

Conversely, suppose that $\psi$ is unramified above every $e$. Let $v$ be a vertex and write $z^{n/d}-f_{1}$ for the polynomial defining an irreducible component above $v$. We claim that the order of $f_{1}$ at any closed point corresponding to an edge is divisible by $n/d$. Indeed, otherwise the Newton polygon of $z^{n/d}-f_{1}$ with respect to the discrete valuation corresponding to the closed point would contain a non-rational slope and thus the extension would be ramified, a contradiction. We conclude by the slope formula that the slope of the piecewise linear function $\phi_{f}$ on every edge $e\in{E(\Sigma)}$ is divisible by $n$. We can then write $\phi_{f}=n\cdot{\phi'}$ for some piecewise linear function $\phi'$. But then $n\Delta(\phi')=\Delta(\phi_{f})=n\tau_{*}(D)$ and thus $\tau_{*}(D)=\Delta(\phi')$. We conclude that $D\in{J^{0}[n]}$, as desired.

Suppose now that $D\in{T}[n]$. We have to check that the induced covering of metrized complexes splits completely. Since $T[n]\subseteq{J^{0}[n]}$, we already know that the covering splits on the edges. We thus only have to show that the covering splits on the vertices. 
By the exact sequence in Equation (\ref{ExactSequenceTorsion}), we know that the reduction of the divisor $D$ at every component is principal, that is $\mathrm{red}(D,\Gamma_{i})=(h_{i})$ for an $h_{i}\in{k(\eta_{v})}$. Furthermore, we have $u_{i}\cdot{}h_{i}^{n}=\overline{f}_{v}$ for some $u_{i}\in{k^{*}}$ by the condition $nD=\mathrm{div}(f)$. It now follows from the explicit description of the normalization given in the first paragraph of the proof that the covering is completely split above every $v$. 


Conversely, suppose that the extension induced by $D\in{J[n]}$ splits above every vertex. Then the reduction of $z^{n}-f_{v}$ 
splits completely at every $v$. This means that $\overline{f}_{v}$ is an $n$-th power in $k(\eta_{v})$.  
We then see that $\mathrm{red}(D,\Gamma_{i})=(h_{i})$ for some $h_{i}\in{k(\eta_{v})}$ and thus the image of $D$ in $\mathrm{Jac}(\mathcal{X}_{s})$ maps to zero in the Jacobian of every component $\Gamma_{i}$. In other words, $D\in{T}[n]$, as desired. 
\end{proof}

\begin{exa}\label{GenusTwoCurveExample1}
Consider a genus two curve $X$ with skeleton as in Figure \ref{PlaatjeGenus2}. To be more explicit, consider the smooth proper curve $X$ defined locally by 
\begin{equation}
y^2=x(x-\varpi)f(x)
\end{equation}
for a polynomial $f(x)\in{R[x]}$ of degree $3$ and an element $\varpi\in{R}$ with $\mathrm{val}(\varpi)>0$. Here we assume that $\mathrm{char}(k)\neq{2}$, $f(0)\neq{0}$ and $f(\varpi)\neq{0}$. 
A simple computation shows that if $\mathrm{deg}(\overline{f}(x))=3$, $\mathrm{gcd}(\overline{f}(x),\overline{f}'(x))=1$ and $\overline{f}(0)\neq{0}$, 
then $X$ has a skeleton of the desired type. 
For any $n$ coprime to $\mathrm{char}(k)$, we have 
\begin{align*}
J[n]&=(\mathbf{Z}/n\mathbf{Z})^{4},\\
J^{0}[n]&=(\mathbf{Z}/n\mathbf{Z})^{3},\\
T[n]&=(\mathbf{Z}/n\mathbf{Z}). 
\end{align*}
 By Theorem \ref{TorsionUnramifiedCoverings1}, 
the coverings that come from $T[n]$ correspond to the topological abelian coverings of $\Sigma$, and the coverings that come from $J^{0}[n]$ correspond to the metrically unramified abelian coverings of $\Sigma$. 
We invite the reader to compare this for $n=2$ with Example \ref{GenusTwoCoverings}. 



\end{exa}

\begin{center}
\bibliographystyle{alpha}
\bibliography{bibfiles2}{}
\end{center}

\end{document}